\newtheorem{theorem}{Theorem}[section]
\newtheorem{corollary}[theorem]{Corollary}
\newtheorem{lemma}[theorem]{Lemma}
\theoremstyle{definition}
\newtheorem{definition}[theorem]{Definition}
\theoremstyle{remark}
\newtheorem{remark}[theorem]{Remark}
\newtheorem{example}[theorem]{Example}
\DeclareMathOperator{\sign}{sgn}
\definecolor{absolutezero}{rgb}{0.0,0.28,0.73}
\definecolor{darkgreen}{rgb}{0.0,0.38,0.0}
\newcommand{\changed}[1]{{#1}}
\newcommand{\refonly}[1]{}
\DeclareMathOperator*{\bigE}{\mathbb E}
\newcommand{\expected}[2]{
\bigE_{\substack{#1}}\left({#2}\right)}
\newcommand{\conv}[1]{\mathrm{Conv}(#1)}
\newcommand{\vol}{\mathrm{Vol}}
\newcommand{\dd}{\,\mathrm{d}}
\newcommand{\binomial}[2]{\ensuremath{\left( \begin{matrix}#1 \\ #2 \end{matrix} \right)}}
\newcommand{\defun}[5]{\ensuremath{\begin{array}{lrcl}
#1:&#2 & \longrightarrow & #3\\&#4 & \longmapsto & #5\end{array}}}
\newcommand{\defeq}{\stackrel{\mathrm{def}}{=}}
\newcommand{\diag}[1]{\mathrm{diag}\left(#1\right)}
\author{Gregorio Malajovich}
\title[On the expected number of real roots...]{On the expected number of real roots of polynomials and exponential sums}
\date{Apr 12, 2022}
\address{Departamento de Matemática Aplicada, Instituto de Matemática, Universidade Federal do
Rio de Janeiro. Caixa Postal 68530, Rio de Janeiro, RJ 21941-909, Brasil.}
\email{gregorio@im.ufrj.br}
\thanks{This research was partially funded by the {\em Coordenação de Aperfeiçoamento de Pessoal de Nível Superior} (CAPES), grants PROEX and PRINT, and by the {\em Fundação Carlos Chagas Filho de Amparo à Pesquisa do Estado do Rio de Janeiro} (FAPERJ)}
\subjclass[2010]{
Primary 52A39, %Mixed volumes and related topics in convex geometry 
Secondary 
52A22, %Random convex sets and integral geometry  
60D05 %Geometric probability and stochastic geometry 
}
\keywords{Random polynomials, Sparse polynomials, mixed volume, real roots}
\begin{document}
\refonly{
\thispagestyle{empty}
Sydney NSW, Nov 23rd, 2022.
\bigskip

Dear Referees,
\medskip

First of all, thanks for the carful reading of this paper and for the thoughtful comments. Most suggestions were incorporated in the text, and are marked in \changed{red}. The two major suggestions of Referee X are nor Remarks 1.7 and 2.3. 
\medskip

The only exception is comment 25 from referee Y. Chicago's author-year style mandates to spell the first author of each reference by last name, first name. The other authors are spelled first name, last name. This style was incorporated by the American Mathematical Society in the {\tt amsrefs} package. Therefore I could not change the actual ordering of first and last name.
\medskip

In case the publisher as a different policy for the references, this can be addressed at the production stage.
\bigskip

Sincerely yours,
\medskip

Gregorio Malajovich

\setcounter{page}{0}
\newpage
}
\maketitle
\begin{abstract}
The expected number of real projective roots of orthogonally invariant random homogeneous real polynomial systems is known to be equal to the square root of the Bézout number. A similar result is known for random multi-homogeneous systems, invariant through a product of orthogonal groups.
In this note, those results are generalized to certain families of
sparse polynomial systems, with no orthogonal invariance assumed. 
\end{abstract}

\section{Introduction}
\subsection{Motivation and outline}
The Kostlan-Shub-Smale ensemble is the probability space of $n+1$-variate systems of 
polynomial equations of the form
\begin{eqnarray*}
	f_1(\mathbf X) &=& \sum_{|\mathbf a| = d_1} f_{1,\mathbf a}\, \sqrt{\binomial{d_1}{\mathbf a}} \mathbf X^{\mathbf a} 
\\
	&\vdots&\\
	f_n(\mathbf X) &=& \sum_{|\mathbf a| = d_n} f_{n,\mathbf a}\, \sqrt{\binomial{d_n}{\mathbf a}} \mathbf X^{\mathbf a} 
\end{eqnarray*}
with $f_{i,\mathbf a} \sim \mathscr N(0,1)$ i.i.d. real standard normals. 
Above, $\mathbf X^{\mathbf a} = X_0^{a_0} X_1^{a_1} \dots X_n^{a_n}$ and 
$\binomial{d}{\mathbf a} = \frac{d!}{a_0! \dots a_n!}$ is the coefficient of $\mathbf X^{\mathbf a}$ in $(\sum X_i)^d$.

\ocite{Kostlan} and \ocite{Bezout2} proved that the expected number of
projective real roots of a random polynomial from the ensemble above is
$\sqrt{d_1 d_2 \dots d_n}$, that is the square root of the 
generic number of complex projective roots provided by Bézout's Theorem.
The choice of the metric coefficients
$\sqrt{\binomial{d}{\mathbf a}}$ guarantees the invariance of the
probability distribution with respect to orthogonal transformations
\changed{\cite{BCSS}*{Theorem~1 p.218}}.
This is an important step to establish the expected number of roots.
\changed{It is also known that} if we replace $X_0$ by $1$ and $a_0$ by $d-a_1-\dots -a_n$, the 
expected number of real roots remains the same.

\ocite{Rojas-Park-City} generalized this result to systems of
multi-homogeneous polynomial equations, with a proper choice of
metric coefficients.
This choice provides a probability distribution that is 
invariant through the action of a product of orthogonal groups,
eventually after multi-homogeneization.
\medskip

{\em Sparse} polynomial systems are 
systems of polynomials that are not assumed to be dense or multi-homogeneous.
\ocite{Bernstein} found out that the generic number of roots
over \changed{$(\mathbb C \setminus \{0\})^n$} is $n!$ times the mixed volume of the convex
hulls of the supports. In many cases, Bernstein's bound is much smaller than the Bézout number. This rises
the question, is it possible to bound the expected number of real roots
for sparse systems in terms of the mixed volume, or of another geometric
invariant of the supports?
Very little is known, 
except for a local inequality given by \ocite{MRHigh}*{Theorem 3}. 

Below, the Kostlan-Shub-Smale
bound is generalized to certain families of sparse polynomial systems
where the supports have the same {\em shape}. In that case,
the expected number of real solutions grows like the square root of the 
product of the scaling factors. It can also be compared to the
square root of the generic number of complex roots.
The metric coefficients that make this possible
come from \ocite{Aronszajn}'s multiplication of inner product spaces. 

\subsection{Sparse polynomials and exponential sums}
Let $A \changed{\, \subset \,} \mathbb Z^n$ be a finite set (the {\em support}), and $\alpha: A \rightarrow (0, \infty)$ be arbitrary metric coefficients.
\changed{For $\mathbf a \in A$, write $\mathbf X^{\mathbf a} = X_1^{a_1} X_2^{a_2} \dots X_n^{a_n}$ and $e^{\mathbf a \mathbf x} = e^{a_1 x_1 + \dots + a_n x_n}$.}

To each pair $(A, \alpha)$ we can associate the real inner product space $\mathscr P_{A,\alpha}$ of polynomials, respectively the real inner product space $\mathscr E_{A,\alpha}$ of exponential sums, with orthonormal basis
\changed{$\{\dots,\alpha_{\mathbf a} \mathbf X^{\mathbf a}, \dots\}_{\mathbf a \in A}$},
respectively \changed{$\{\dots,\alpha_{\mathbf a} e^{\mathbf a \mathbf x}, \dots\}_{\mathbf a \in A}$}. Those spaces \changed{correspond to functions with closely related domains}: if $\mathscr D$ is a 
measurable subset of $\mathbb R^n$ and $\exp(\mathscr D) \changed{\, \subset \,} \mathbb R_{>0}^n$ its 
componentwise exponentiation, 
the solution set of the system
\begin{equation}\label{expsum}
	\sum_{\mathbf a \in A} f_{i\changed{,}\mathbf a}\, \alpha_{\mathbf a} e^{\mathbf a \mathbf x} = 0 
	\hspace{3em} i=1, \dots, n
\end{equation}
for $\mathbf x \in \mathscr D$ is mapped by exponentiation $\mathbf x \mapsto \mathbf X = \begin{pmatrix} e^{x_1} \\ \vdots \\ e^{x_n} \end{pmatrix}$ onto the
solution set of the system
\begin{equation}\label{sparse-poly}
	\sum_{\mathbf a \in A} f_{ \mathbf a}\, \alpha_{\mathbf a} \mathbf X^{\mathbf a} = 0
	\hspace{3em} i=1, \dots, n,
\end{equation}
for $X \in \exp(\mathscr D)$.
In the particular case $\mathscr D=\mathbb R^n$, $\exp(\mathscr D)$ is the
(strictly) positive orthant. In order to state our main results, it is
convenient to restrict Aronszajn's multiplication of reproducing kernel spaces
to the particular case of spaces of polynomials (resp. exponential sums) with monomial basis.

\begin{definition}[Aronszajn multiplication, restricted case]
\[
\mathscr P_{A,\alpha}= 
\mathscr P_{B, \beta} \mathscr P_{C,\gamma}
	\hspace{1em} \text{and}\hspace{1em}  
\mathscr E_{A,\alpha}= 
\mathscr E_{B, \beta} \mathscr E_{C,\gamma} 
\]
if and only if $A = B+C$ and for each $\mathbf a \in A$,
\[
\gamma_{\mathbf a}^2 = \sum_{\mathbf b + \mathbf c = \mathbf a}
\alpha_{\mathbf b}^2 \beta_{\mathbf c}^2.
\]
\end{definition}
Inductively, the $d$-th power of a space is defined by
$\mathscr P_{A,\alpha}^d = 
\mathscr P_{A,\alpha} \mathscr P_{A,\alpha}^{d-1}$. Its support is not
$d A$ in general, 
but the convex hull of its support is $d\, \conv{A}$. In this sense,
we say that $A$ and the support of $\mathscr P_{A, \alpha}^d$ have {\em the same shape}.

\begin{example}\label{Kostlan-ensemble}
Let $\mathscr P$ be the space with orthonormal basis $1, X_1, \dots, X_n$.
Then $\mathscr P^d$ has orthonormal basis
	\[\changed{
	\left\{ \dots, 
	\sqrt{\binomial{d}{\mathbf a}} \mathbf X^{\mathbf a},
	\dots \right\}_{|\mathbf a| \le d}
	}\]
and is isomorphic to the space of homogeneous $n+1$-variate 
polynomials of degree
$d$ with the Weyl-Bombieri inner product.
\end{example}

\subsection{Main results}
Suppose that $\mathscr F_1, \dots, \mathscr F_n$ are spaces
of exponential sums, $\mathscr F_i = \mathscr E_{A_i, \alpha_i}$. 
We denote by
$E_{\mathscr D}(\mathscr F_1, \dots, \mathscr F_n)$ the expected number
of roots in $\mathscr D$ of a system of exponential sums of the form
\begin{equation} \label{mixed}
\sum_{\mathbf a \in A_i} 
\mathbf f_{i \mathbf a} \alpha_{i \mathbf a} e^{\mathbf a \mathbf x}
 = 0 \hspace{3em} i=1,\dots, n,
\end{equation}
for $\mathbf f_{i\changed{,}\mathbf a} \sim \mathscr N(0,1)$ i.i.d. standard normal random
real numbers. In case $\mathscr F_1 = \dots = \mathscr F_n = \mathscr F$,
we write just $E_{\mathscr D}(\mathscr F)$.
\medskip
\par
\begin{theorem}\label{gen-square-root}
	Let $\mathscr D$ be a measurable set of $\mathbb R^n$.
	Assume that $\mathscr F_1$, \dots, $\mathscr F_n$ are spaces
	of exponential sums, and that $d_1, \dots, d_n \in \mathbb N$.
	Then,
\[
	E_{\mathscr D}( \mathscr F_1^{d_1}, \dots, \mathscr F_n^{d_n}) 
= \sqrt{d_1 d_2 \dots d_n} 
	E_{\mathscr D}(\mathscr F_1, \dots, \mathscr F_n).
\]
\end{theorem}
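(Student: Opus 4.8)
The plan is to reduce the multi-degree statement to the case $\mathscr F_i^{d_i} = \mathscr F_i \cdot \mathscr G_i$ for a single factor at a time, and to compute the expected number of real roots via an integral-geometric (Kac--Rice type) formula that only sees the \emph{reproducing kernel} of each space restricted to the diagonal. Concretely, to each space of exponential sums $\mathscr F = \mathscr E_{A,\alpha}$ one associates the kernel
\[
K_{\mathscr F}(\mathbf x, \mathbf y) = \sum_{\mathbf a \in A} \alpha_{\mathbf a}^2\, e^{\mathbf a \mathbf x}\, e^{\mathbf a \mathbf y},
\]
and the Kac--Rice formula expresses $E_{\mathscr D}(\mathscr F_1,\dots,\mathscr F_n)$ as an integral over $\mathbf x \in \mathscr D$ of a density built from the $1$-jet of the Gaussian field with covariance $K_{\mathscr F_i}$ at $\mathbf x$, i.e. from the matrix of values $\partial^{p}_{\mathbf x}\partial^{q}_{\mathbf y} K_{\mathscr F_i}(\mathbf x,\mathbf x)$ for $|p|,|q|\le 1$. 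So the first step is to write down this formula carefully and identify precisely which local data of the kernel enters.

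The second, and conceptually central, step is the behaviour of the kernel under Aronszajn multiplication: if $\mathscr E_{A,\alpha} = \mathscr E_{B,\beta}\,\mathscr E_{C,\gamma}$, i.e. $A = B+C$ and $\alpha_{\mathbf a}^2 = \sum_{\mathbf b+\mathbf c=\mathbf a}\beta_{\mathbf b}^2\gamma_{\mathbf c}^2$, then
\[
K_{\mathscr E_{A,\alpha}}(\mathbf x,\mathbf y) = K_{\mathscr E_{B,\beta}}(\mathbf x,\mathbf y)\, K_{\mathscr E_{C,\gamma}}(\mathbf x,\mathbf y),
\]
which is an immediate expansion of the product of two sums. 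Hence $K_{\mathscr F^d} = K_{\mathscr F}^{\,d}$. The key analytic consequence is the behaviour of the normalized field: the Gaussian field associated to $\mathscr F^d$ has, after rescaling by $K_{\mathscr F^d}(\mathbf x,\mathbf x)^{-1/2} = K_{\mathscr F}(\mathbf x,\mathbf x)^{-d/2}$, a covariance structure on the $1$-jet at $\mathbf x$ whose "metric" part (the second-moment matrix of the gradient of the normalized field, which is the pullback metric that governs the Kac--Rice density) is exactly $d$ times that of $\mathscr F$. This is the manifestation, in the language of kernels, of the classical fact that the Veronese-type embedding attached to $K^d$ pulls back the Fubini--Study-type metric to $d$ times the one attached to $K$; one proves it by taking $\log K_{\mathscr F^d} = d\log K_{\mathscr F}$ and differentiating twice, since the relevant metric is the Hessian of $\log K$ on the diagonal.

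The third step assembles these facts. In the equal-spaces case one writes the Kac--Rice density for $\mathscr F^d$ at $\mathbf x$: it is the expected absolute Jacobian of the normalized system conditioned on vanishing, and by the scaling just described the Gram matrix controlling the Jacobian is $d$ times that for $\mathscr F$, while the conditioning (which depends only on the zeroth-order part) is unchanged; the absolute determinant of an $n\times n$ matrix scales by $d^{n/2}$, and the Gaussian density normalization of the conditioned gradient contributes a compensating factor, leaving a net factor $\sqrt d$ pointwise in $\mathbf x$. Integrating over $\mathscr D$ gives $E_{\mathscr D}(\mathscr F^d) = \sqrt d\, E_{\mathscr D}(\mathscr F)$. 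For the mixed case, the density at $\mathbf x$ factors as an integral over the conditioned Jacobian whose $i$-th row comes from $\mathscr F_i^{d_i}$ independently of the others; replacing row $i$ by its $\mathscr F_i^{d_i}$-version scales that row's contribution by $\sqrt{d_i}$, and multilinearity of the determinant (together with independence across $i$) yields the product $\sqrt{d_1\cdots d_n}$. Then induct, or simply apply the single-factor computation coordinate by coordinate.

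I expect the main obstacle to be step two done \emph{with the conditioning included}: one must verify that, in the Kac--Rice density, the only place $d$ enters after normalization is through the gradient Gram matrix scaling by $d$, and that the part depending on the value of the field at $\mathbf x$ (which enters through conditioning on the system vanishing there, and through the normalizing density of the value) is genuinely unaffected — equivalently, that the pair (normalized value, normalized gradient) for $\mathscr F^d$ has the same distribution as for $\mathscr F$ up to scaling the gradient block by $\sqrt d$. Handling non-centered or degenerate kernels (e.g. when $\mathscr D$ is lower-dimensional, or $K_{\mathscr F}(\mathbf x,\mathbf x)$ varies) requires care in justifying the differentiation of $\log K$ and the integrability of the density, but these are technical rather than conceptual points, and the homogeneity $\log K_{\mathscr F^d} = d\log K_{\mathscr F}$ is what makes everything go through uniformly in $\mathbf x$.
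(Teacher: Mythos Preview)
Your approach is essentially the paper's: both derive an integral (coarea/Kac--Rice) formula for $E_{\mathscr D}$, both identify the only local datum that matters at $\mathbf x$ as the pull-back metric $\langle\cdot,\cdot\rangle_{i,\mathbf x}$, and both prove this metric is $\tfrac12 D^2\log K_{\mathscr F_i}(\mathbf x,\mathbf x)$ so that $K_{\mathscr F^d}=K_{\mathscr F}^{\,d}$ forces the metric to scale by $d_i$ and hence the $i$-th Jacobian row to scale in law by $\sqrt{d_i}$. The only presentational difference is that the paper repackages the expected absolute determinant as a mixed volume via Weil's theorem (Theorem~\ref{WWeil}) and then invokes multilinearity of mixed volume, whereas you argue directly on the determinant by homogeneity; these are equivalent here, and the mixed-volume formulation really earns its keep only in Theorem~\ref{sub-additive}, where monotonicity is needed.

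One genuine slip to fix: your ``equal-spaces'' warm-up is wrong as stated. After normalizing by $K(\mathbf x,\mathbf x)^{1/2}$, the value of each $f_i$ at $\mathbf x$ is standard Gaussian independently of $d$, and the conditioned gradient covariance is exactly $d$ times that for $\mathscr F$; there is no ``compensating factor from the Gaussian density of the conditioned gradient.'' The density therefore scales by $d^{n/2}$, giving $E_{\mathscr D}(\mathscr F^d)=d^{n/2}\,E_{\mathscr D}(\mathscr F)$, which is precisely the theorem with $d_1=\cdots=d_n=d$, not $\sqrt d$. Your mixed-case paragraph already gets this right, so either drop the warm-up or correct the exponent.
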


\begin{corollary}
	Let $\mathscr D$ be a measurable set of $\mathbb R^n$.
	Assume that $\mathscr P_1$, \dots, $\mathscr P_n$ are spaces
	of polynomials, and that $d_1, \dots, d_n \in \mathbb N$.
	Then,
\[
	E_{\exp(\mathscr D)}( \mathscr P_1^{d_1}, \dots, \mathscr P_n^{d_n}) 
= \sqrt{d_1 d_2 \dots d_n} 
	E_{\exp(\mathscr D)}(\mathscr P_1, \dots, \mathscr P_n).
\]
\end{corollary}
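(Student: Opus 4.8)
The plan is to deduce the Corollary from Theorem~\ref{gen-square-root} using the exponentiation correspondence between spaces of polynomials and spaces of exponential sums set up in the previous subsection. The point is that everything in the statement — the root count, the probability model, and Aronszajn multiplication — transfers under the substitution $\mathbf X = (e^{x_1}, \dots, e^{x_n})$.

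First I would record the elementary fact that componentwise exponentiation is a diffeomorphism from $\mathbb R^n$ onto the positive orthant $\mathbb R_{>0}^n$; in particular, if $\mathscr D \subseteq \mathbb R^n$ is measurable then so is $\exp(\mathscr D) \subseteq \mathbb R_{>0}^n$, and the map restricts to a bijection $\mathscr D \to \exp(\mathscr D)$. As observed after \eqref{expsum}--\eqref{sparse-poly}, for any finite support $A$ and any metric coefficients $\alpha$, this bijection carries the solution set in $\mathscr D$ of the exponential-sum system \eqref{mixed} built from the spaces $\mathscr E_{A_i,\alpha_i}$ onto the solution set in $\exp(\mathscr D)$ of the polynomial system \eqref{sparse-poly} built from $\mathscr P_{A_i,\alpha_i}$ with the \emph{same} coefficients $f_{i,\mathbf a}$. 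Since the random coefficients $f_{i,\mathbf a} \sim \mathscr N(0,1)$ are the same i.i.d.\ family in the two models, taking cardinalities of the (a.s.\ finite) solution sets and then expectations gives
\[
E_{\exp(\mathscr D)}(\mathscr P_{A_1,\alpha_1}, \dots, \mathscr P_{A_n,\alpha_n}) = E_{\mathscr D}(\mathscr E_{A_1,\alpha_1}, \dots, \mathscr E_{A_n,\alpha_n}),
\]
and likewise in the equal-spaces case. (This identity may also be read as the definition of the left-hand side.)

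The second step is to check that Aronszajn multiplication is compatible with this correspondence. Writing $\mathscr P_i = \mathscr P_{A_i,\alpha_i}$, an immediate induction on $d$ from the Definition shows $\mathscr P_i^{d} = \mathscr P_{A_i^{(d)},\alpha_i^{(d)}}$, where $A_i^{(d)} = A_i + A_i^{(d-1)}$ and $(\alpha^{(d)}_{i,\mathbf a})^2 = \sum_{\mathbf b + \mathbf c = \mathbf a} (\alpha_{i,\mathbf b})^2 (\alpha^{(d-1)}_{i,\mathbf c})^2$. Because the Definition imposes \emph{exactly the same} conditions on $(A,\alpha)$, $(B,\beta)$, $(C,\gamma)$ in the polynomial and the exponential-sum case, the exponential-sum space attached to the same data satisfies $\mathscr E_i^{d} = \mathscr E_{A_i^{(d)},\alpha_i^{(d)}}$ with the same $A_i^{(d)}$, $\alpha_i^{(d)}$. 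Hence the exponential-sum space corresponding to $\mathscr P_i^{d_i}$ under exponentiation is precisely $\mathscr E_i^{d_i}$, where $\mathscr E_i = \mathscr E_{A_i,\alpha_i}$.

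Combining the two steps, $E_{\exp(\mathscr D)}(\mathscr P_1^{d_1}, \dots, \mathscr P_n^{d_n}) = E_{\mathscr D}(\mathscr E_1^{d_1}, \dots, \mathscr E_n^{d_n})$, and Theorem~\ref{gen-square-root} applied to $\mathscr E_1, \dots, \mathscr E_n$ rewrites this as $\sqrt{d_1 d_2 \dots d_n}\, E_{\mathscr D}(\mathscr E_1, \dots, \mathscr E_n) = \sqrt{d_1 d_2 \dots d_n}\, E_{\exp(\mathscr D)}(\mathscr P_1, \dots, \mathscr P_n)$, which is the claim. I do not expect a genuine obstacle here: the only items needing care are bookkeeping — that exponentiation preserves measurability and is a genuine bijection on the relevant domains, so that root counts (and their expectations) transfer verbatim with no solutions lost at the boundary of the orthant since we stay inside $\mathbb R_{>0}^n$, and that the defining relation for Aronszajn products is phrased identically in the two settings so that the $d$-th powers correspond.
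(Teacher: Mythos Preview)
Your argument is correct and is exactly the intended one: the paper states this as an immediate corollary of Theorem~\ref{gen-square-root}, relying on the exponentiation correspondence $\mathscr P_{A,\alpha}\leftrightarrow\mathscr E_{A,\alpha}$ set up in \S1.2 and the fact that Aronszajn multiplication is defined by the same conditions on $(A,\alpha)$ in both settings. The paper does not spell out these bookkeeping steps, so your write-up is more detailed than the paper's (implicit) proof, but the route is the same.
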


If $\mathbf x$ is a complex root of \eqref{mixed}, then for all
$\mathbf k \in \mathbb Z^n$, $\mathbf x + 2 \pi \sqrt{-1} \mathbf k$
is also a complex root of \eqref{mixed}. Therefore it makes sense to count the
number of complex roots in the quotient space
$\mathbb C^n \mod 2 \pi \sqrt{-1} \mathbb Z^n$.
We denote by $E_{\mathscr D \times (-\pi,\pi]^n}(\mathscr F_1, \dots, \mathscr F_n)$ this generic number of roots.
As explained by \ocite{Fewspaces}, Aronszajn's multiplication gives the set 
of reproducing \changed{spaces} a semigroup structure. \changed{For each $i$,
assume $\mathcal F_j$ fixed for all $j \ne i$. The  
mapping
\[
\mathcal F_i \mapsto
E_{\mathscr D \times (-\pi,\pi]^n}(\mathscr F_1, \dots, \mathscr F_n) 
\]
giving the generic number of roots is
a homomorphism from the semi-group of reproducing kernel spaces
into the additive semi-group of positive real numbers.} Hence,
\[
E_{\mathscr D \times (-\pi,\pi]^n}(\mathscr F_1^{d_1}, \dots, \mathscr F_n^{d_n})
=
d_1 d_2 \dots d_n
E_{\mathscr D \times (-\pi,\pi]^n}(\mathscr F_1, \dots, \mathscr F_n)
\]
\begin{corollary}
	Let $\mathscr D$ be a measurable set of $\mathbb R^n$.
	Assume that $\mathscr F_1$, \dots, $\mathscr F_n$ are spaces
	of exponential sums, with $E_{\mathscr D \times (-\pi,\pi]^n}(\mathscr F_1^{d_1}, \dots, \mathscr F_n^{d_n}) \ne 0$.
	Then,
\[
	\frac{E_{\mathscr D}( \mathscr F_1^{d_1}, \dots, \mathscr F_n^{d_n})}
	{\sqrt{E_{\mathscr D \times (-\pi,\pi]^n}(\mathscr F_1^{d_1}, \dots, \mathscr F_n^{d_n})}}
\]
is independent of $d_1, \dots, d_n$.
\end{corollary}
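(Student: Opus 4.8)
The plan is entirely routine: the two ingredients are already in place, so I would just divide the square-root law of Theorem~\ref{gen-square-root} by the homogeneity of the complex count and cancel the common factor.

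\emph{Step 1.} Apply Theorem~\ref{gen-square-root} to the numerator:
\[
E_{\mathscr D}(\mathscr F_1^{d_1}, \dots, \mathscr F_n^{d_n}) = \sqrt{d_1 d_2 \cdots d_n}\; E_{\mathscr D}(\mathscr F_1, \dots, \mathscr F_n).
\]

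\emph{Step 2.} Use the displayed identity immediately preceding this corollary — itself a consequence of the fact that, with all but the $i$-th space held fixed, $\mathscr F_i \mapsto E_{\mathscr D \times (-\pi,\pi]^n}(\mathscr F_1, \dots, \mathscr F_n)$ is a homomorphism from the Aronszajn semigroup of reproducing kernel spaces into $(\mathbb R_{>0}, +)$, applied in succession for $i = 1, \dots, n$ — to write
\[
E_{\mathscr D \times (-\pi,\pi]^n}(\mathscr F_1^{d_1}, \dots, \mathscr F_n^{d_n}) = d_1 d_2 \cdots d_n\; E_{\mathscr D \times (-\pi,\pi]^n}(\mathscr F_1, \dots, \mathscr F_n).
\]
Hence the denominator equals $\sqrt{d_1 d_2 \cdots d_n}\;\sqrt{E_{\mathscr D \times (-\pi,\pi]^n}(\mathscr F_1, \dots, \mathscr F_n)}$.

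\emph{Step 3.} The hypothesis $E_{\mathscr D \times (-\pi,\pi]^n}(\mathscr F_1^{d_1}, \dots, \mathscr F_n^{d_n}) \ne 0$ forces both $d_1 d_2 \cdots d_n \ne 0$ and $E_{\mathscr D \times (-\pi,\pi]^n}(\mathscr F_1, \dots, \mathscr F_n) \ne 0$, so the divisions below are legitimate. Cancelling the factor $\sqrt{d_1 d_2 \cdots d_n}$ gives
\[
\frac{E_{\mathscr D}(\mathscr F_1^{d_1}, \dots, \mathscr F_n^{d_n})}{\sqrt{E_{\mathscr D \times (-\pi,\pi]^n}(\mathscr F_1^{d_1}, \dots, \mathscr F_n^{d_n})}} = \frac{E_{\mathscr D}(\mathscr F_1, \dots, \mathscr F_n)}{\sqrt{E_{\mathscr D \times (-\pi,\pi]^n}(\mathscr F_1, \dots, \mathscr F_n)}},
\]
whose right-hand side involves none of $d_1, \dots, d_n$ — which is the assertion.

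Since Theorem~\ref{gen-square-root} and the homomorphism property have already been established, there is no genuine analytic obstacle here. The one point I would double-check is the passage in Step~2 from the word ``homomorphism'' to the explicit linear scaling $E(\dots, \mathscr F_i^{d_i}, \dots) = d_i\, E(\dots, \mathscr F_i, \dots)$ — i.e.\ that the $d_i$-th Aronszajn power maps to $d_i$-fold addition in $(\mathbb R_{>0}, +)$ — and that this may be iterated over the $n$ slots independently to yield the full product $d_1 \cdots d_n$. Granting that bookkeeping, the corollary is exactly the one-line computation above.
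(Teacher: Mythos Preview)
Your proposal is correct and matches the paper's intended argument: the paper states this as an immediate corollary, giving no explicit proof, since it follows at once from Theorem~\ref{gen-square-root} and the displayed identity $E_{\mathscr D \times (-\pi,\pi]^n}(\mathscr F_1^{d_1}, \dots, \mathscr F_n^{d_n}) = d_1 \cdots d_n\, E_{\mathscr D \times (-\pi,\pi]^n}(\mathscr F_1, \dots, \mathscr F_n)$ preceding it. Your Steps~1--3 simply spell out the obvious division and cancellation.
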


Also, we show in this note that the expected number of real roots is sub-additive \changed{with respect to Aronszajn multiplication}:

\begin{theorem}\label{sub-additive}
	Let $\mathscr D$ be a measurable set of $\mathbb R^n$.
	Assume that $\mathscr F_1$, \dots, $\mathscr F_{n-1}$,
	$\mathscr G$ and $\mathscr H$ are spaces
	of exponential sums, with $E_{\mathscr D \times (-\pi,\pi]^n}(\mathscr F_1^{d_1}, \dots, \mathscr F_n^{d_n}) \ne 0$.
	\[
		E_{\mathscr D}(\mathscr F_1, \dots, \changed{\mathscr F_{n-1}}, \mathscr G \mathscr H) \le
	E_{\mathscr D}(\mathscr F_1, \dots, \mathscr F_{n-1}, \mathscr G)+
	E_{\mathscr D}(\mathscr F_1, \dots, \mathscr F_{n-1}, \mathscr H).
\]
\end{theorem}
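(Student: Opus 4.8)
The plan is to turn the inequality, via the Kac--Rice formula, into a pointwise estimate for the integrand and then into an elementary linear-algebra identity relating the covariance data of $\mathscr G$, $\mathscr H$ and $\mathscr G\mathscr H$. Fix a space $\mathscr K$ of exponential sums to occupy the $n$-th slot, let $f=(f_1,\dots,f_{n-1},f_{\mathscr K})$ be the associated centered Gaussian field with independent coordinates, and write $\sigma_i(\mathbf x)^2=K_i(\mathbf x,\mathbf x)$, $\sigma_{\mathscr K}(\mathbf x)^2=K_{\mathscr K}(\mathbf x,\mathbf x)$ for the pointwise variances, $K_i,K_{\mathscr K}$ being the reproducing kernels of $\mathscr F_i$ and $\mathscr K$; note $\sigma_{\mathscr K}(\mathbf x)>0$ everywhere since $\alpha>0$. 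The non-degeneracy hypothesis makes $0$ an almost-sure regular value, so the Kac--Rice formula gives
\[
E_{\mathscr D}(\mathscr F_1,\dots,\mathscr F_{n-1},\mathscr K)=\int_{\mathscr D} p_{f(\mathbf x)}(0)\;\mathbb E\bigl[\,|\det Df(\mathbf x)|\;\mid\;f(\mathbf x)=0\,\bigr]\,\dd\mathbf x .
\]
Conditioned on $f(\mathbf x)=0$, the rows of $Df(\mathbf x)$ are independent Gaussian vectors; the first $n-1$ of them, say $\xi_1,\dots,\xi_{n-1}$, depend only on $\mathscr F_1,\dots,\mathscr F_{n-1}$, while the last is $\eta\sim\mathscr N\bigl(0,M_{\mathscr K}(\mathbf x)\bigr)$ with
\[
\bigl(M_{\mathscr K}\bigr)_{jk}=\partial_{x_j}\partial_{y_k}K_{\mathscr K}(\mathbf x,\mathbf y)\big|_{\mathbf y=\mathbf x}-\frac{\bigl(\partial_{x_j}K_{\mathscr K}(\mathbf x,\mathbf y)\big|_{\mathbf y=\mathbf x}\bigr)\bigl(\partial_{x_k}K_{\mathscr K}(\mathbf x,\mathbf y)\big|_{\mathbf y=\mathbf x}\bigr)}{K_{\mathscr K}(\mathbf x,\mathbf x)} .
\]

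Cofactor-expanding $\det Df(\mathbf x)$ along its last row, $\det Df(\mathbf x)=\langle w,\eta\rangle$ where $w=w(\mathbf x,\xi_1,\dots,\xi_{n-1})$ is the vector of signed $(n-1)\times(n-1)$ minors of the other rows and does not involve $\mathscr K$. Since $\langle w,\eta\rangle\sim\mathscr N(0,w^{\mathsf T}M_{\mathscr K}w)$, integrating out $\eta$ first and separating the $\mathscr K$-independent factors ($p_{f(\mathbf x)}(0)$ contributes $\sigma_{\mathscr K}^{-1}$ times a $\mathscr K$-free factor) yields
\[
E_{\mathscr D}(\mathscr F_1,\dots,\mathscr F_{n-1},\mathscr K)=\int_{\mathscr D}\frac{C(\mathbf x)}{\sigma_{\mathscr K}(\mathbf x)}\;\mathbb E_{\xi_1,\dots,\xi_{n-1}}\!\left[\sqrt{w^{\mathsf T}M_{\mathscr K}(\mathbf x)\,w}\,\right]\dd\mathbf x ,\qquad C(\mathbf x)=\frac{\sqrt{2/\pi}}{(2\pi)^{n/2}\prod_{i<n}\sigma_i(\mathbf x)} .
\]
Because $w$ and $C(\mathbf x)$ are the same for $\mathscr K=\mathscr G$, $\mathscr H$, $\mathscr G\mathscr H$, it suffices to prove, for every $\mathbf x\in\mathscr D$ and every $w\in\mathbb R^n$, the pointwise inequality
\[
\frac{1}{\sigma_{\mathscr G\mathscr H}}\sqrt{w^{\mathsf T}M_{\mathscr G\mathscr H}\,w}\;\le\;\frac{1}{\sigma_{\mathscr G}}\sqrt{w^{\mathsf T}M_{\mathscr G}\,w}+\frac{1}{\sigma_{\mathscr H}}\sqrt{w^{\mathsf T}M_{\mathscr H}\,w},
\]
and then take $\mathbb E_{\xi_\bullet}$ of both sides, multiply by $C(\mathbf x)$, and integrate over $\mathscr D$.

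For the pointwise inequality I would use that Aronszajn multiplication is exactly pointwise multiplication of reproducing kernels, $K_{\mathscr G\mathscr H}=K_{\mathscr G}K_{\mathscr H}$. Differentiating this identity at $(\mathbf x,\mathbf x)$ gives $\sigma_{\mathscr G\mathscr H}^2=\sigma_{\mathscr G}^2\sigma_{\mathscr H}^2$; substituting into the formula above for $M_{\mathscr G\mathscr H}$, the mixed products $K_j L_k$, $K_k L_j$ (with $K=K_{\mathscr G}$, $L=K_{\mathscr H}$, subscripts denoting first derivatives at the diagonal) arising from $\partial_{x_j}\partial_{y_k}(K_{\mathscr G}K_{\mathscr H})$ cancel exactly against those in the subtracted rank-one correction, leaving
\[
M_{\mathscr G\mathscr H}=\sigma_{\mathscr H}^2\,M_{\mathscr G}+\sigma_{\mathscr G}^2\,M_{\mathscr H},\qquad\text{equivalently}\qquad\frac{M_{\mathscr G\mathscr H}}{\sigma_{\mathscr G\mathscr H}^2}=\frac{M_{\mathscr G}}{\sigma_{\mathscr G}^2}+\frac{M_{\mathscr H}}{\sigma_{\mathscr H}^2}.
\]
Hence $w^{\mathsf T}M_{\mathscr G\mathscr H}w/\sigma_{\mathscr G\mathscr H}^2=w^{\mathsf T}M_{\mathscr G}w/\sigma_{\mathscr G}^2+w^{\mathsf T}M_{\mathscr H}w/\sigma_{\mathscr H}^2$, and the pointwise inequality is just $\sqrt{a+b}\le\sqrt a+\sqrt b$ for $a,b\ge 0$. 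The algebraic core here is a one-line product-rule computation, and it is precisely the normalization by $\sigma^2$ — the projectivization implicit in counting zero sets — that converts the non-additive matrices $M$ into the additive quantities $M/\sigma^2$. The step I expect to demand the most care is not this identity but the Kac--Rice reduction: one must check that the non-degeneracy hypothesis genuinely makes $0$ an almost-sure regular value of $f$ on $\mathscr D$ (so there is no correction term and no contribution from a degenerate locus), and one must keep precise track of which factors are independent of $\mathscr K$ so that the inequality survives after integrating over $\mathscr D$ and over $\xi_1,\dots,\xi_{n-1}$.
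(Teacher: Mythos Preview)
Your argument is correct, and at its core it is the same proof as the paper's: your identity $M_{\mathscr G\mathscr H}/\sigma_{\mathscr G\mathscr H}^2 = M_{\mathscr G}/\sigma_{\mathscr G}^2 + M_{\mathscr H}/\sigma_{\mathscr H}^2$ is precisely Lemma~\ref{additivity-lemma} (the matrix $M_{\mathscr K}/\sigma_{\mathscr K}^2$ is the Gram matrix of the pulled-back Fubini--Study form $\langle\cdot,\cdot\rangle_{\mathscr K,\mathbf x}=\tfrac12 D^2\log K_{\mathscr K}(\mathbf x,\mathbf x)$), and your pointwise step $\sqrt{a+b}\le\sqrt a+\sqrt b$ is exactly the triangle inequality~\eqref{Triangle}.

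The genuine difference is in how the Kac--Rice integrand is handled. The paper passes through Weil's theorem (Theorem~\ref{WWeil}) to rewrite the expected absolute determinant as a mixed volume $n!\,\mathrm{MV}(C_1(\mathbf x),\dots,C_n(\mathbf x))$ with $h_{C_i}(\mathbf u)=(2\pi)^{-1/2}\|\mathbf u\|_{i,\mathbf x}$, and then uses that support-function addition gives Minkowski sums together with monotonicity and multilinearity of the mixed volume. You instead cofactor-expand $\det Df(\mathbf x)$ along the last row and integrate out $\eta$ directly, which bypasses Weil's theorem and all convex-geometry language entirely. Your route is more elementary and self-contained; the paper's route has the advantage of expressing everything through the same mixed-volume integral (Lemma~\ref{MV-integral}) already needed for Theorem~\ref{gen-square-root}, and it makes the inclusion $C_n(\mathbf x)\subset C_{\mathscr G}(\mathbf x)+C_{\mathscr H}(\mathbf x)$ geometrically visible. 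Your concern about the regularity hypothesis for Kac--Rice is legitimate bookkeeping, but note that the paper's proof does not actually use the stated non-degeneracy hypothesis either; the coarea derivation of Lemma~\ref{MV-integral} goes through once each $\sigma_i(\mathbf x)>0$, which is automatic since the metric coefficients $\alpha$ are strictly positive.
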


\changed{
\begin{remark}
It is easy to produce spaces as products of inner product spaces. However, decomposing a given space as a product may be an extremely difficult problem. The results in this paper assume that such a product decomposition is available.
\end{remark}}
\subsection{Sign conditions}
There is nothing particular about positive zeros of polynomials. 
To each possible {\em sign condition} $\mathbf s \in \{ \pm 1\}^n$, we associate the orthant $\mathbb R^n_{\mathbf s} = \{
\mathbf X \in \mathbb R^n: \sign(\mathbf X) = \mathbf s \}$.
Let $W=\changed{\, \bigcup\, } W_{\mathbf s}$ be measurable, where $W_{\mathbf s} \changed{\, \subset \,} \mathbb R^n_{\mathbf s}$.
Also, let 
$|W_{\mathbf s}| = 
\diag{\mathbf s} W_{\mathbf s} \changed{\, \subset \,} \mathbb R_{>0}^n$.
Assume $W$ measurable. That is, all the
$W_{\mathbf s}$ are measurable.
\begin{lemma} Let $W \subset \mathbb R_{\ne 0}^n$.
	Let $\mathscr P_1 = \mathscr P_{A_1, \alpha_1}$, \dots, $\mathscr P_n = \mathscr P_{A_n, \alpha_n}$ be spaces of polynomials. Then,
\[
	E_{W}(\mathscr P_1, \dots, \mathscr P_n) =
	\sum_{\changed{\mathbf s \in \{ \pm 1 \}^n}} 
	E_{|W_{\mathbf s}|}(\mathscr P_1, \dots, \mathscr P_n)
.
\]
\end{lemma}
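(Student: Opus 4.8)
The plan is to reduce to the positive‑orthant case one sign condition at a time, exploiting the symmetry of the standard Gaussian under coordinate sign flips. First I would note that the orthants $\mathbb R^n_{\mathbf s}$, $\mathbf s \in \{\pm 1\}^n$, partition $\mathbb R^n_{\ne 0}$, so $W$ is the disjoint union of the measurable pieces $W_{\mathbf s}$. For any fixed instance of the random polynomial system, the number of roots in $W$ equals the sum over $\mathbf s$ of the number of roots in $W_{\mathbf s}$; since these root‑counts are non‑negative (integer‑valued, a.s.\ finite) measurable functions of the coefficients, taking expectations and using additivity gives
\[
	E_W(\mathscr P_1, \dots, \mathscr P_n) = \sum_{\mathbf s \in \{\pm 1\}^n} E_{W_{\mathbf s}}(\mathscr P_1, \dots, \mathscr P_n),
\]
an identity valid in $[0,+\infty]$ irrespective of integrability. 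It thus remains to prove $E_{W_{\mathbf s}}(\mathscr P_1, \dots, \mathscr P_n) = E_{|W_{\mathbf s}|}(\mathscr P_1, \dots, \mathscr P_n)$ for each $\mathbf s$.

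For this I would use the linear change of variables $\Phi_{\mathbf s}\colon \mathbf X \mapsto \diag{\mathbf s}\,\mathbf X$, an involution of $\mathbb R^n_{\ne 0}$ that restricts to a bijection of $W_{\mathbf s}$ onto $|W_{\mathbf s}| \subset \mathbb R^n_{>0}$. Writing $\mathbf Y = \diag{\mathbf s}\,\mathbf X$, so that $\mathbf X^{\mathbf a} = (\diag{\mathbf s}\,\mathbf Y)^{\mathbf a} = \mathbf s^{\mathbf a}\,\mathbf Y^{\mathbf a}$ with $\mathbf s^{\mathbf a} = s_1^{a_1}\cdots s_n^{a_n} \in \{\pm 1\}$, the polynomial system $\sum_{\mathbf a \in A_i} \mathbf f_{i,\mathbf a}\,\alpha_{i,\mathbf a}\,\mathbf X^{\mathbf a} = 0$ evaluated at $\mathbf X \in W_{\mathbf s}$ becomes
\[
	\sum_{\mathbf a \in A_i} \left(\mathbf s^{\mathbf a}\,\mathbf f_{i,\mathbf a}\right)\alpha_{i,\mathbf a}\,\mathbf Y^{\mathbf a} = 0, \qquad i = 1, \dots, n,
\]
in the unknown $\mathbf Y \in |W_{\mathbf s}|$. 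Because $\Phi_{\mathbf s}$ is a diffeomorphism it induces a multiplicity‑preserving bijection between the roots of the original system in $W_{\mathbf s}$ and the roots of this transformed system in $|W_{\mathbf s}|$, so the two root‑counts coincide pointwise (for every realization of the coefficients).

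Finally I would observe that the transformed system is again a random polynomial system with the same supports $A_i$ and the same metric coefficients $\alpha_i$: its coefficients are $\widetilde{\mathbf f}_{i,\mathbf a} = \mathbf s^{\mathbf a}\,\mathbf f_{i,\mathbf a}$, and since each factor $\mathbf s^{\mathbf a}$ is a deterministic $\pm 1$ and $\mathscr N(0,1)$ is symmetric and the $\mathbf f_{i,\mathbf a}$ are independent, the family $(\widetilde{\mathbf f}_{i,\mathbf a})$ has exactly the same joint law as $(\mathbf f_{i,\mathbf a})$, namely i.i.d.\ standard normal. Hence the expected number of roots of the transformed system in $|W_{\mathbf s}|$ is precisely $E_{|W_{\mathbf s}|}(\mathscr P_1, \dots, \mathscr P_n)$, and combining this with the pointwise equality of root‑counts from the previous paragraph yields $E_{W_{\mathbf s}}(\mathscr P_1, \dots, \mathscr P_n) = E_{|W_{\mathbf s}|}(\mathscr P_1, \dots, \mathscr P_n)$; summing over $\mathbf s$ finishes the proof. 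The argument is essentially bookkeeping, and the only point requiring care is the distributional invariance of the Gaussian coefficient vector under the sign flips $\mathbf f_{i,\mathbf a} \mapsto \mathbf s^{\mathbf a}\mathbf f_{i,\mathbf a}$ — this is exactly what makes the reduction an equality rather than merely an inequality.
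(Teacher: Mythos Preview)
Your proof is correct and follows the same approach as the paper: decompose $W$ into the orthant pieces $W_{\mathbf s}$, then use the sign-flip change of variables $\mathbf X \mapsto \diag{\mathbf s}\,\mathbf X$ together with the symmetry of the i.i.d.\ Gaussian coefficients to identify $E_{W_{\mathbf s}}$ with $E_{|W_{\mathbf s}|}$. Your write-up is in fact more detailed than the paper's on the measurability and distributional-invariance points, but the argument is the same.
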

\begin{proof}
	Because $W$ is the disjoint union of the $W_{\mathbf s}$,
\[
E_{W}(\mathscr P_1, \dots, \mathscr P_n) =
	\sum_{\changed{\mathbf s \in \{ \pm 1 \}^n}} 
	E_{W_{\mathbf s}}(\mathscr P_1, \dots, \mathscr P_n)
.
\]
For each sign condition $\mathbf s$,
we can map a solution pair $(\mathbf f, \diag{\mathbf s} \mathbf X)$, $X \in \mathbb R_{>0}^n$ for
Equation \eqref{sparse-poly}
into another solution pair $(\mathbf g, \mathbf X)$
with
\[
g_{i,\mathbf a} = \mathbf s^{\mathbf a} f_{i,\mathbf a}
.
\]
	Since the $f_{i \mathbf a}$ have symmetric, independent distributions, \changed{we have}
\[
	E_{W_{\mathbf s}}(\mathscr P_1, \dots, \mathscr P_n)
=
	E_{|W_{\mathbf s}|}(\mathscr P_1, \dots, \mathscr P_n)
.
\]
\end{proof}

\begin{corollary}
	Let $\mathscr W$ be a measurable set of $\mathbb R_{\ne 0}^n$.
	Assume that $\mathscr P_1$, \dots, $\mathscr P_n$ are spaces
	of polynomials, and that $d_1, \dots, d_n \in \mathbb N$.
	Then,
\[
	E_{\mathscr W}( \mathscr P_1^{d_1}, \dots, \mathscr P_n^{d_n}) 
= \sqrt{d_1 d_2 \dots d_n} 
	E_{\mathscr W}(\mathscr P_1, \dots, \mathscr P_n).
\]
\end{corollary}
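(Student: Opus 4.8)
The plan is to reduce everything to the case of the positive orthant, which is already covered, by decomposing $\mathscr W$ according to sign conditions. First I would apply the preceding Lemma, but with the spaces $\mathscr P_1^{d_1}, \dots, \mathscr P_n^{d_n}$ in place of $\mathscr P_1, \dots, \mathscr P_n$; since that Lemma is stated for arbitrary spaces of polynomials, this is legitimate. Writing $\mathscr W = \bigcup_{\mathbf s} \mathscr W_{\mathbf s}$ with $\mathscr W_{\mathbf s} \subset \mathbb R^n_{\mathbf s}$ and $|\mathscr W_{\mathbf s}| = \diag{\mathbf s}\,\mathscr W_{\mathbf s} \subset \mathbb R_{>0}^n$, this yields
\[
E_{\mathscr W}(\mathscr P_1^{d_1}, \dots, \mathscr P_n^{d_n}) = \sum_{\mathbf s \in \{\pm 1\}^n} E_{|\mathscr W_{\mathbf s}|}(\mathscr P_1^{d_1}, \dots, \mathscr P_n^{d_n}).
\]

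Next, since the componentwise logarithm is a diffeomorphism $\mathbb R_{>0}^n \to \mathbb R^n$, each measurable $|\mathscr W_{\mathbf s}|$ can be written as $\exp(\mathscr D_{\mathbf s})$ for a measurable $\mathscr D_{\mathbf s} \subset \mathbb R^n$. I would then apply the polynomial Corollary to Theorem~\ref{gen-square-root} (the one for $E_{\exp(\mathscr D)}$) to each summand, obtaining
\[
E_{\exp(\mathscr D_{\mathbf s})}(\mathscr P_1^{d_1}, \dots, \mathscr P_n^{d_n}) = \sqrt{d_1 d_2 \cdots d_n}\; E_{\exp(\mathscr D_{\mathbf s})}(\mathscr P_1, \dots, \mathscr P_n).
\]
Summing over $\mathbf s$, pulling the constant $\sqrt{d_1 \cdots d_n}$ out of the finite sum, and then applying the Lemma once more — this time with the original spaces $\mathscr P_i$ — rewrites $\sum_{\mathbf s} E_{|\mathscr W_{\mathbf s}|}(\mathscr P_1, \dots, \mathscr P_n)$ as $E_{\mathscr W}(\mathscr P_1, \dots, \mathscr P_n)$, which is the claimed identity.

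The argument is essentially bookkeeping, so I do not expect a genuine obstacle; the only points that deserve a line of justification are that $\mathscr W \subset \mathbb R^n_{\ne 0}$ ensures the pieces $\mathscr W_{\mathbf s}$ exhaust $\mathscr W$ (nothing sits on the coordinate hyperplanes, so the Lemma applies verbatim), and that measurability is preserved both under $\diag{\mathbf s}$ and under componentwise $\log$. If one preferred to bypass the intermediate Corollary and invoke Theorem~\ref{gen-square-root} directly, one would additionally record that a space of polynomials $\mathscr P_{A,\alpha}$ is carried by exponentiation to the space of exponential sums $\mathscr E_{A,\alpha}$, matching roots in $\exp(\mathscr D_{\mathbf s})$ bijectively with roots in $\mathscr D_{\mathbf s}$ and intertwining Aronszajn multiplication — but this is exactly what the stated Corollary already packages, so I would simply cite it.
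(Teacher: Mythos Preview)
Your proposal is correct and follows exactly the route the paper intends: the corollary is stated immediately after the sign-condition Lemma with no separate proof, so the implicit argument is precisely to combine that Lemma (applied once to the $\mathscr P_i^{d_i}$ and once to the $\mathscr P_i$) with the earlier Corollary for $E_{\exp(\mathscr D)}$, just as you spell out. The side remarks about measurability under $\diag{\mathbf s}$ and componentwise $\log$ are the only bookkeeping needed, and they hold as you note.
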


\section{Vitale's theorem and generalization}

The proof of the main results will use a generalization of a theorem
stated by 
\ocite{Vitale}. 

\begin{theorem}\label{Vitale}
Let $\mathscr R$ be a probability measure in $\mathbb R^n$ with
$\expected{\mathbf r \simeq \mathscr R}{\|\mathbf r\|}< \infty$.
Then,
	\[
		\expected{\mathbf r_1, \dots, \mathbf r_n \sim \mathscr R \text{ i.i.d.}}{\left| \det \begin{pmatrix} &\mathbf r_1& \\ &\vdots& \\ &\mathbf r_n& \end{pmatrix} \right|}
	= n! \vol_n{C}
	\]
	where $C$ is the convex body with support function
\[
\defun{h_C}{S^{n-1}}{\mathbb R}{\mathbf u}{h_C(\mathbf u) = 
	\expected{\mathbf r \sim \mathscr R}{h_{[0,\mathbf r]}(\mathbf u)}.}
\]
\end{theorem}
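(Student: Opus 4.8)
The plan is to reduce the statement to the additivity of mixed volumes / the behaviour of the support function under Minkowski sums, together with Fubini and the classical formula for the volume of a zonotope. First I would recall that for a fixed tuple $\mathbf r_1, \dots, \mathbf r_n \in \mathbb R^n$, the absolute value of the determinant equals $n!$ times the volume of the simplex with vertices $0, \mathbf r_1, \dots, \mathbf r_n$; but more useful here is the identity
\[
\left| \det(\mathbf r_1, \dots, \mathbf r_n) \right| = \vol_n\left( [0,\mathbf r_1] + \dots + [0,\mathbf r_n] \right),
\]
i.e. the volume of the parallelepiped, which is the Minkowski sum of the segments $[0,\mathbf r_i]$. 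This is the key bridge between determinants and convex geometry.

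Next I would take expectations. Writing the parallelepiped volume via the polarization of the volume form, $\vol_n([0,\mathbf r_1] + \dots + [0,\mathbf r_n]) = n!\, V(\,[0,\mathbf r_1], \dots, [0,\mathbf r_n]\,)$ where $V$ denotes the mixed volume, the quantity to be computed becomes $n!$ times the expectation of a mixed volume of independent random segments. Since mixed volume is multilinear with respect to Minkowski addition and continuous, and since the support function of a Minkowski sum is the sum of support functions, the expectation should commute with the mixed-volume operation: one wants
\[
\expected{\mathbf r_1, \dots, \mathbf r_n}{V([0,\mathbf r_1], \dots, [0,\mathbf r_n])} = V(C, \dots, C) = \vol_n(C),
\]
where $C$ is the convex body whose support function is $h_C(\mathbf u) = \expected{\mathbf r \sim \mathscr R}{h_{[0,\mathbf r]}(\mathbf u)}$. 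The integrability hypothesis $\expected{\|\mathbf r\|}< \infty$ guarantees that $h_C$ is finite and sublinear, hence is genuinely the support function of a (compact convex) body, and also that all the expectations are finite, so that these interchanges are legitimate. Concretely, one expands the mixed volume of the segments as a finite sum (over subsets / coordinate choices) of products of the coordinates of the $\mathbf r_i$, takes expectations termwise using independence — so each factor involving $\mathbf r_i$ gets replaced by its mean — and then recognizes the result as the same expansion applied to $C$.

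The main obstacle, and the step deserving the most care, is justifying that $C$ as defined is actually a convex body and that the ``expectation commutes with mixed volume'' step is valid under only a first-moment hypothesis: one must verify that $h_C$ is positively homogeneous and subadditive (both inherited from $h_{[0,\mathbf r]}$ by linearity of expectation and the triangle inequality under the integral sign, which is where $\expected{\|\mathbf r\|}<\infty$ enters), invoke the standard fact that every finite sublinear function on $\mathbb R^n$ is the support function of a unique compact convex set, and then either (i) approximate $\mathscr R$ by finitely supported measures — for which $C$ is literally a zonotope $\sum \lambda_j [0,\mathbf r^{(j)}]$ and the identity is the classical zonotope volume formula — and pass to the limit using continuity of mixed volume on the space of convex bodies with the Hausdorff metric together with dominated convergence, or (ii) argue directly by the multilinear expansion sketched above. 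I would present route (i) as the cleaner argument: truncate to $\|\mathbf r\| \le R$, discretize, apply the finite (zonotope) case, let the discretization refine and then $R \to \infty$, checking at each stage that the relevant support functions converge uniformly on $S^{n-1}$ and that the determinant expectations converge by the $L^1$ bound. Finally I would note that the $n=1$ case and the degenerate case where $\mathscr R$ is supported on a hyperplane through $0$ (so $C$ is lower-dimensional and both sides vanish) are covered without change.
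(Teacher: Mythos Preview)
The paper does not give its own proof of Theorem~\ref{Vitale}: the result is quoted from \ocite{Vitale} as background, and only the immediate corollary~\eqref{volC} and the mixed-volume generalization (Theorem~\ref{WWeil}) are stated, again without proof. So there is no in-paper argument to compare against.

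As for the proposal itself: route~(i) via zonotope approximation is the correct one and is essentially Vitale's original argument. For a finitely supported $\mathscr R$ with atoms $\mathbf r^{(j)}$ of mass $p_j$, the body $C=\sum_j p_j[0,\mathbf r^{(j)}]$ is a zonotope, the classical formula gives $\vol_n(C)=\sum_{j_1<\cdots<j_n} p_{j_1}\cdots p_{j_n}\,|\det(\mathbf r^{(j_1)},\dots,\mathbf r^{(j_n)})|$, and the expectation side yields exactly $n!$ times this because repeated indices kill the determinant and the $n!$ orderings of distinct indices all contribute the same absolute determinant. The limiting step under the first-moment hypothesis is handled as you describe.

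Route~(ii), however, does not work as written. You say one ``expands the mixed volume of the segments as a finite sum \dots\ of products of the coordinates of the $\mathbf r_i$'' and then takes expectations termwise using independence. But $n!\,V([0,\mathbf r_1],\dots,[0,\mathbf r_n])=|\det(\mathbf r_1,\dots,\mathbf r_n)|$ is the \emph{absolute value} of a multilinear form, not itself multilinear in the $\mathbf r_i$; independence does not let you push the expectation through the absolute value factor by factor. The multilinearity you need is Minkowski-linearity of mixed volume in each convex-body argument, and that is exactly what the zonotope computation in route~(i) exploits. So keep route~(i) and drop the termwise-expectation shortcut.
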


Before the generalization, we review a few 
\changed{related identities.} If $\mathscr R \sim -\mathscr R$, 
\[
	h_C(\mathbf u) = \frac{1}{2}\expected{\mathbf r \sim \mathscr R}{\left| \langle \mathbf u, \mathbf r \rangle \right|}.
\]
In case $\mathbf r \sim \mathscr N(0, \Sigma^2)$, $\langle \mathbf u, \mathbf r \rangle \sim \mathscr N(0, \|\Sigma \mathbf u\|^2)$ and $|\langle \mathbf u, \mathbf r \rangle |$ is a {\em half-normal} of average $\sqrt{\frac{2}{\pi}} \| \Sigma \mathbf u\|$. Hence, 
\begin{equation}\label{volC}
	\vol_n(C) = (2\pi)^{-\frac{n}{2}} \det(\Sigma) \vol_n B^n.
\end{equation}

\ocite{BBLM}*{Theorem 5.4} generalized Vitale's theorem to
the situation where blocks of rows are i.i.d, and each block
is independent. A particular case which they trace back to 
Wolfgang Weil \ycite{Weil76}*{Theorem 4.2} is of particular
interest here:

\begin{theorem}\label{WWeil}
Let $\mathscr R_1, \dots, \mathscr R_n$ be probability measures in $\mathbb R^n$ with
	$\expected{\mathbf r \simeq \mathscr R_i}{\|\mathbf r\|}< \infty$ \changed{for all $i$.}
Then,
	\[
		\expected{\mathbf r_i \sim \mathscr R_i \text{ indep.}}{\left| \det \begin{pmatrix} &\mathbf r_1& \\ &\vdots& \\ &\mathbf r_n& \end{pmatrix} \right|}
			= n! MV(C_1, \dots, C_n)
	\]
	where $C_i$ is the convex body with support function
\[
	\defun{h_{C_i}}{S^{n-1}}{\mathbb R}{\mathbf u}{h_{C_i}(\mathbf u) = 
	\expected{\mathbf r \sim \mathscr R_i}{h_{[0,\mathbf r]}(\mathbf u)}.}
\]
\end{theorem}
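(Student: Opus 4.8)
The plan is to deduce Theorem~\ref{WWeil} from Vitale's theorem (Theorem~\ref{Vitale}) by polarization, in the same way that the volume polynomial is polarized to define mixed volumes.

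Write $\delta(\mathbf v_1, \dots, \mathbf v_n) = \left| \det \begin{pmatrix} \mathbf v_1 \\ \vdots \\ \mathbf v_n \end{pmatrix} \right|$ for the absolute determinant of the matrix with rows $\mathbf v_i$; it is invariant under permutations of the rows. First I would record the two facts that make everything well posed. By Hadamard's inequality $\delta(\mathbf r_1, \dots, \mathbf r_n) \le \prod_i \|\mathbf r_i\|$, so independence of the rows gives $\mathbb E[\delta(\mathbf r_1, \dots, \mathbf r_n)] \le \prod_i \mathbb E_{\mathbf r \sim \mathscr R_i} \|\mathbf r\| < \infty$; and since $h_{[0,\mathbf r]}(\mathbf u) = \max(0, \langle \mathbf u, \mathbf r\rangle)$ is sublinear in $\mathbf u$ with $|h_{[0,\mathbf r]}| \le \|\mathbf r\|$ on $S^{n-1}$, the average $h_{C_i} = \mathbb E_{\mathbf r \sim \mathscr R_i}[h_{[0,\mathbf r]}]$ is a finite sublinear function, hence the support function of a genuine convex body $C_i$. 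These are exactly the hypotheses under which Theorem~\ref{Vitale} applies.

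Fix a vector $\lambda = (\lambda_1, \dots, \lambda_n)$ with $\lambda_i \ge 0$ and $\sum_i \lambda_i = 1$, and form the mixture probability measure $\mathscr R_\lambda = \sum_i \lambda_i \mathscr R_i$. Then $\mathbb E_{\mathbf r \sim \mathscr R_\lambda}\|\mathbf r\| < \infty$, and the convex body $C_\lambda$ that Theorem~\ref{Vitale} associates to $\mathscr R_\lambda$ has support function $h_{C_\lambda} = \sum_i \lambda_i h_{C_i}$; because the $\lambda_i$ are non-negative this is the support function of the Minkowski combination $\sum_i \lambda_i C_i$, so $C_\lambda = \sum_i \lambda_i C_i$. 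Theorem~\ref{Vitale} thus yields
\[
\mathbb E_{\mathbf r_1, \dots, \mathbf r_n \sim \mathscr R_\lambda \text{ i.i.d.}}\bigl[\delta(\mathbf r_1, \dots, \mathbf r_n)\bigr] = n!\, \vol_n\bigl(\textstyle\sum_i \lambda_i C_i\bigr).
\]
Now I expand both sides as polynomials in $\lambda$. The product measure factorizes, $\mathscr R_\lambda^{\otimes n} = \sum_{i_1, \dots, i_n} \lambda_{i_1} \cdots \lambda_{i_n}\, \mathscr R_{i_1} \otimes \cdots \otimes \mathscr R_{i_n}$, so the left side equals $\sum_{i_1, \dots, i_n} \lambda_{i_1} \cdots \lambda_{i_n}\, D(i_1, \dots, i_n)$, where $D(i_1, \dots, i_n) = \mathbb E[\delta(\mathbf r_1, \dots, \mathbf r_n)]$ for independent $\mathbf r_j \sim \mathscr R_{i_j}$ (finite by the Hadamard bound); the right side equals $n! \sum_{i_1, \dots, i_n} \lambda_{i_1} \cdots \lambda_{i_n}\, MV(C_{i_1}, \dots, C_{i_n})$ by the defining polynomiality of the mixed volume, in the normalization $MV(K, \dots, K) = \vol_n(K)$ that makes Theorems~\ref{Vitale} and~\ref{WWeil} agree when all the data coincide. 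Both expressions are homogeneous of degree $n$ in $\lambda$ and equal on the simplex $\{\lambda_i \ge 0,\ \sum_i \lambda_i = 1\}$; writing an arbitrary $\lambda$ in the positive orthant as $t\mu$ with $t = \sum_i \lambda_i$ and $\mu$ in the simplex, homogeneity forces equality on the whole positive orthant, hence coefficientwise.

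Finally I match the coefficient of the squarefree monomial $\lambda_1 \lambda_2 \cdots \lambda_n$. On the left it is $\sum_{\sigma \in S_n} D(\sigma(1), \dots, \sigma(n))$, and each summand equals $D(1, \dots, n)$ because $\delta$ is unchanged under permuting rows, so the coefficient is $n!\, D(1, \dots, n)$. On the right it is $n! \cdot n!\, MV(C_1, \dots, C_n)$, since $MV$ is symmetric and $\lambda_1 \cdots \lambda_n$ is produced by the $n!$ permutations of $(1, \dots, n)$. Equating gives $D(1, \dots, n) = n!\, MV(C_1, \dots, C_n)$, which is exactly the claim. I expect the only points needing genuine care to be the two ``soft'' facts isolated in the second paragraph — that the expectation of the sublinear functions $h_{[0,\mathbf r]}$ is again a support function, and the identification $C_\lambda = \sum_i \lambda_i C_i$ — together with the bookkeeping of the two $n!$ factors in the last step; the rest is routine. (One can instead argue directly: $\delta(\mathbf r_1, \dots, \mathbf r_n)$ is the volume of the zonotope $[0,\mathbf r_1] + \cdots + [0,\mathbf r_n]$, equal to $n!\,MV([0,\mathbf r_1], \dots, [0,\mathbf r_n])$, and taking expectations term by term using independence and the additivity of mixed volume under the selection expectation $\mathbb E[0,\mathbf r_i] = C_i$ gives the same conclusion — but that route invokes the Aumann-integral machinery, whereas the polarization above uses only Theorem~\ref{Vitale}.)
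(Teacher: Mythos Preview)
The paper does not give its own proof of Theorem~\ref{WWeil}; it is quoted from the literature, traced to Weil~(1976) and stated as a special case of Theorem~5.4 in Breiding--B\"urgisser--Lerario--Mathis. Your polarization argument is correct and supplies a self-contained derivation from Theorem~\ref{Vitale} alone: forming the mixture $\mathscr R_\lambda=\sum_i\lambda_i\mathscr R_i$, applying Vitale's i.i.d.\ result, and reading off the coefficient of $\lambda_1\cdots\lambda_n$ is exactly the standard polarization that turns a volume identity into a mixed-volume identity. The delicate points you flagged --- that $h_{C_\lambda}=\sum_i\lambda_i h_{C_i}$ identifies $C_\lambda$ with the Minkowski combination $\sum_i\lambda_i C_i$ because the $\lambda_i$ are non-negative, and that equality on the probability simplex plus degree-$n$ homogeneity forces equality of the two polynomials on the open orthant and hence coefficientwise --- are handled correctly, as is the bookkeeping of the two $n!$ factors. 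Your parenthetical alternative via $\delta(\mathbf r_1,\dots,\mathbf r_n)=n!\,MV([0,\mathbf r_1],\dots,[0,\mathbf r_n])$ and linearity of mixed volume in each argument under the selection expectation $\mathbb E[0,\mathbf r_i]=C_i$ is also valid and is closer in spirit to the zonoid-algebra viewpoint of the cited BBLM paper; the polarization route you chose has the advantage of invoking nothing beyond Theorem~\ref{Vitale}.
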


\changed{
\begin{remark}
An anonymous referee pointed out that 
Theorem 5.4 in \ocite{BBLM} assumes that the rows $r_i$ are divided 
into independent blocks. Therefore,
this theorem can possibly provide a root
count for random systems partitionned
into independent blocks of equations. As this generality 
exceeds the objectives of this paper, this avenue of research 
will not be pursued here.
\end{remark}
}

\section{The real toric variety}

\subsection{Notations.}
We will use the notation $\mathbf f_i$ for the row vector $[\dots, f_{i\changed{,}\mathbf a}, \dots]_{\mathbf a \in A}$. The condition that the $f_{i\changed{,}\mathbf a} \sim \mathscr N(0,1; \mathbb R)$ are i.i.d. is equivalent to the requirement that the
$\mathbf f_i \sim
\mathscr N(0,I; \mathbb R^{A_i})$ are independent. Also, we define the column vector
\[
	V_{A,\alpha}(\mathbf x) = 
	\begin{pmatrix} \vdots  \\
		\alpha_{\mathbf a} e^{\mathbf a \mathbf x}
		\\
		\vdots
	\end{pmatrix}_{\mathbf a \in A}
	.
\]
System \eqref{mixed} can now be
written as
\[
\mathbf f_i \mathbf V_{A_i,\alpha_i}(\mathbf x) = 0 \hspace{1em} i=1,\dots, n.
\]
\par

The space $\mathscr E_{A, \alpha}$ is a reproducing kernel space \changed{\cite{Aronszajn}} with 
\[
K_{A, \alpha}( \mathbf x, \mathbf y) = 
\mathbf V_{A, \alpha}( \mathbf y)^* 
\mathbf V_{A, \alpha}( \mathbf x) 
= 
\sum_{\mathbf a \in A}
\alpha_{\mathbf a}^2 e^{\mathbf a (\mathbf x + \mathbf y)}
\]
so that 
\[
	\mathbf f \, \mathbf V_{A,\alpha}(\mathbf x) = \langle \mathbf f(\cdot), 
K_{A,\alpha}(\cdot, \mathbf x) \rangle
.
\]

\subsection{Integral geometry}
Let $\mathscr F = \mathscr E_{A, \alpha}$ with $\mathbf V=\mathbf V_{A,\alpha}$. The basic construction in integral
geometry gives the expected number of zeros
$E_{\mathscr D}(\mathscr F)$ in terms of the $n$-dimensional volume of 
$\mathscr V = \{[ \mathbf V(\mathbf x)]: \mathbf x \in \mathcal D \changed{\, \subset \,} \mathbb R^n \} \changed{\, \subset \,} \mathbb P(\mathbb R^A)$. More precisely

\begin{theorem}\label{integral1}
	\[
		E_{\mathscr D}(\mathscr F) = 
		\frac{\vol_n (\mathscr V)}{\vol_n (\mathbb R\mathbb P^{n})}
	\]
\end{theorem}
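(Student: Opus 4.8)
The plan is to realise the zero set of the random system as the intersection of the image set $\mathscr V$ with a uniformly random projective linear subspace, and then to invoke the integral‑geometry (Poincar\'e--Crofton) formula in $\mathbb R \mathbb P^{N}$, where $N = \#A - 1 = \dim \mathbb P(\mathbb R^A)$. Here $\mathscr F = \mathscr E_{A,\alpha}$, so the system is $\mathbf f_i \mathbf V(\mathbf x) = 0$, $i = 1, \dots, n$, with $\mathbf V = \mathbf V_{A,\alpha}$ and $\mathbf f_i \sim \mathscr N(0,I;\mathbb R^{A})$ independent.

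First I would observe that the equation $\mathbf f_i \mathbf V(\mathbf x) = \langle \mathbf f_i, \mathbf V(\mathbf x) \rangle = 0$ depends on $\mathbf f_i$ only through its direction $\mathbf u_i = \mathbf f_i / \|\mathbf f_i\|$, and that by rotational invariance of the standard Gaussian the $\mathbf u_i$ are independent and uniformly distributed on the unit sphere of $\mathbb R^A$. Hence the solution set in $\mathscr D$ is $\{ \mathbf x \in \mathscr D : [\mathbf V(\mathbf x)] \in H_1 \cap \dots \cap H_n \}$, where $H_i = \mathbb P(\mathbf u_i^{\perp})$ is a uniformly random projective hyperplane; and since the $\mathbf u_i$ are independent, $L = H_1 \cap \dots \cap H_n$ is a uniformly random projective subspace of codimension $n$ for the orthogonally invariant measure on the Grassmannian — the standard fact that successive slicing by independent uniform hyperplanes reproduces the invariant measure.

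Next I would assume (it is automatic once the differences $A - A$ span $\mathbb R^n$, and holds generically otherwise) that $\mathbf x \mapsto [\mathbf V(\mathbf x)]$ is an injective immersion of $\mathscr D$ into $\mathbb R \mathbb P^{N}$, so that $\mathscr V$ carries a well-defined $n$-volume and $E_{\mathscr D}(\mathscr F) = \mathbb E[\#(\mathscr V \cap L)]$. The Poincar\'e--Crofton formula in $\mathbb R \mathbb P^{N}$ (see \cite{BCSS} for the form tailored to root counting) then yields
\[
\mathbb E[\#(\mathscr V \cap L)] = \frac{\vol_n(\mathscr V)}{\vol_n(\mathbb R \mathbb P^{n})},
\]
which is the claim. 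Equivalently, one can bypass the Grassmannian and argue by Kac--Rice: $F(\mathbf x) = (\mathbf f_i \mathbf V(\mathbf x))_{i=1}^{n}$ is a centered Gaussian vector with covariance $\|\mathbf V(\mathbf x)\|^2 I_n$, and conditioning on $F(\mathbf x) = 0$ leaves the rows of $\mathrm D F(\mathbf x)$ i.i.d.\ Gaussian on $\mathbf V(\mathbf x)^{\perp}$; carrying out the resulting expectation identifies the Kac--Rice density at $\mathbf x$ with $(\vol_n \mathbb R \mathbb P^{n})^{-1}$ times the Jacobian of $[\mathbf V] : \mathscr D \to \mathbb R \mathbb P^{N}$ (the factors $\|\mathbf V(\mathbf x)\|$ cancel because the target is projective), after which the area formula integrates this to $\vol_n(\mathscr V)/\vol_n(\mathbb R \mathbb P^{n})$.

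The hard part will be the bookkeeping around the measure-zero exceptional loci: one must check that almost surely $L$ meets $\mathscr V$ transversally, misses the set where $[\mathbf V]$ fails to be an immersion or an injection, and misses any pathological part of $\overline{\mathscr V} \setminus \mathscr V$, so that counting points of $\mathscr V \cap L$ genuinely counts zeros in $\mathscr D$. Moreover, since $\mathscr D$ is only assumed measurable and possibly unbounded, one needs a truncation and monotone-convergence argument to reduce to the compact smooth situation in which Crofton (or Kac--Rice) applies directly; these steps are routine but depend on the finiteness implicit in the statement.
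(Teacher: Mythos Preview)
Your argument is correct: the Crofton formula in $\mathbb R\mathbb P^{N}$ applied to the image $\mathscr V$ of the Veronese-type embedding $\mathbf x \mapsto [\mathbf V(\mathbf x)]$ is exactly the classical route (Kostlan, Shub--Smale) to this identity, and your Kac--Rice alternative is also valid.

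The paper, however, proceeds differently. It first proves the more general Lemma~\ref{MV-integral}, which expresses $E_{\mathscr D}(\mathscr F_1,\dots,\mathscr F_n)$ for possibly distinct spaces $\mathscr F_i$ as $\frac{n!}{(2\pi)^{n/2}}\int_{\mathscr D}\mathrm{MV}_n(C_1(\mathbf x),\dots,C_n(\mathbf x))\,\dd\mathbf x$, via the coarea formula and the Weil/Vitale identity (Theorem~\ref{WWeil}) relating expected absolute determinants to mixed volumes. Theorem~\ref{integral1} is then obtained by specialising to $\mathscr F_1=\dots=\mathscr F_n=\mathscr F$: the mixed volume collapses to the volume of a single ellipsoid $C(\mathbf x)$, whose volume equals $(2\pi)^{-n/2}\vol_n(B^n)$ times the Jacobian of $[\mathbf V]$, and the constants are matched via the gamma-function identity of Lemma~\ref{tech-lemma}. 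Your direct Crofton argument is shorter and more transparent for Theorem~\ref{integral1} in isolation; the paper's detour through mixed volumes is chosen because Lemma~\ref{MV-integral} is the engine behind the main results (Theorems~\ref{gen-square-root} and~\ref{sub-additive}), for which a plain Crofton formula on a single projective space is not available.
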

Since Theorem~\ref{integral1} \changed{will} follow from a more general statement, its proof is postponed.

\begin{example} Let $\mathscr D=\mathbb R^n$.
Let $\mathscr F$ be the space with 
orthonormal basis $(1, e^{x_1}, \dots, e^{x_n})$.
We produce an isomorphism with 
the space $\mathscr P$
from example~\ref{Kostlan-ensemble} 
by setting $X_i = e^{x_i}$. For convenience, let $X_0=1$.
Then,
	$\mathscr V = \{ (X_0:X_1:\cdots:X_n) \in \mathbb R \mathbb P^n: \changed{(X_i > 0\, \forall i)} \text{ or } \changed{(X_i < 0\, \forall i)} \}$ so $\mathscr E_{\mathbb R_{>0}^n}(\mathscr P)=2^{-n}$. 
\end{example}

\subsection{The mixed real toric variety}

Now suppose that $\mathscr F_i = \mathscr E_{A_i, \alpha_i}$ with
$\mathbf V_i=\mathbf V_{A_i, \alpha_i}$, for $i=1, \dots, n$. In this
case we set
\[
\defun{\mathbf V}{\mathbb R^n}{\mathscr F_1^* \times \dots \times \mathscr F_n^*}{\mathbf x}{\mathbf V(\mathbf x) = (\mathbf V_1(\mathbf x), \dots, \mathbf V_n(\mathbf x))}
\]
and compose it with canonical projections to obtain
\[
	\defun{[\mathbf V]}{\mathbb R^n}{\mathbb P(\mathscr F_1^*) \times \dots \times \mathbb P(\mathscr F_n^*)}{\mathbf x}{[\mathbf V(\mathbf x)] = ([\mathbf V_1(\mathbf x)], \dots, [\mathbf V_n(\mathbf x)])}
.
\]
The mixed Veronese variety $\mathscr V$ is just the topological closure of
$\{ [\mathbf V(\mathbf x)]: \mathbf x \in \mathbb R^n\}$. 
For each $i$, define $\langle \cdot , \cdot \rangle_{i, \mathbf x}$ as the pull-back of the
real Fubini-Study metric of $\mathbb P(\mathscr F_i^*)$ by $[\mathbf V_i](\mathbf x)$. Define also $\|\cdot\|_{i, \mathbf x}$ as the associated metric and
$K_i(\mathbf x, \mathbf y) = \mathbf V_i(\mathbf y)^* \mathbf V_i(\mathbf x)$ the reproducing \changed{kernel}.
\begin{lemma}\label{MV-integral} 
Let $\mathscr F_1, \dots, \mathscr F_n$ be spaces of exponential sums,
and let $\mathscr D$ be measurable in $\mathbb R^n$.
	Then, 
\[
	E_{\mathscr D}
	(\mathscr F_1, \dots, \mathscr F_n) = 
	\frac{n!}{\sqrt{2\pi}^{n}}
	\int_{\mathscr D} 
	\changed{\dd}\mathbb R^n(\mathbf x) 
\mathrm{MV}_n( C_1(\mathbf x), \dots, C_n(\mathbf x))
\]
	where $C_i(\mathbf x)$ is the convex body with support function
\[
	h_{C_i(\mathbf x)}(\mathbf u) = 
\frac{1}{\sqrt{2\pi}} \|\mathbf u\|_{i,\mathbf x}
.
\]
\end{lemma}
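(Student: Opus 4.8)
\section*{Proof proposal}

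The plan is to apply the Kac--Rice formula (in the integral-geometric form used in the proofs of the Shub--Smale and \cite{BCSS} results) to the random map $F\colon \mathbb R^n \to \mathbb R^n$ with coordinates $F_i(\mathbf x) = \mathbf f_i \mathbf V_i(\mathbf x)$, and then to recognize the resulting Kac--Rice integrand as a mixed volume through Theorem~\ref{WWeil}. First I would note that, since the components of $\mathbf V_i(\mathbf x)$ are $\alpha_{i,\mathbf a} e^{\mathbf a \mathbf x} > 0$, we have $\|\mathbf V_i(\mathbf x)\|^2 = K_i(\mathbf x,\mathbf x) > 0$ for every $\mathbf x$; together with the real-analyticity of $F$ in $\mathbf x$ and the non-degeneracy of the Gaussian coefficients, this guarantees the standard regularity hypotheses of the Kac--Rice formula, so that
\[
E_{\mathscr D}(\mathscr F_1,\dots,\mathscr F_n) = \int_{\mathscr D} \bigE\Bigl[\, \bigl|\det DF(\mathbf x)\bigr| \ \Big|\ F(\mathbf x)=0 \,\Bigr]\, p_{F(\mathbf x)}(0)\ \dd\mathbb R^n(\mathbf x),
\]
where $p_{F(\mathbf x)}$ is the density of the random vector $F(\mathbf x)$.

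Next I would compute the two ingredients. Because the $\mathbf f_i \sim \mathscr N(0,I;\mathbb R^{A_i})$ are independent, the scalars $F_i(\mathbf x) = \langle \mathbf f_i, \mathbf V_i(\mathbf x)\rangle \sim \mathscr N(0,\|\mathbf V_i(\mathbf x)\|^2)$ are independent, hence $p_{F(\mathbf x)}(0) = \sqrt{2\pi}^{-n}\prod_{i=1}^n \|\mathbf V_i(\mathbf x)\|^{-1}$. For the Jacobian, the $i$-th row of $DF(\mathbf x)$ is $\mathbf f_i\, D\mathbf V_i(\mathbf x)$; conditioning the Gaussian vector $\mathbf f_i$ on $\langle \mathbf f_i, \mathbf V_i(\mathbf x)\rangle = 0$ replaces it by its orthogonal projection onto $\mathbf V_i(\mathbf x)^\perp$, so the $i$-th row becomes a centered Gaussian row $\mathbf r_i$ with covariance $Q_i(\mathbf x) = D\mathbf V_i(\mathbf x)^* P_i\, D\mathbf V_i(\mathbf x)$, where $P_i$ is the orthogonal projector onto $\mathbf V_i(\mathbf x)^\perp$. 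The key point is that this is precisely the Gram matrix of the pulled-back real Fubini--Study metric, scaled: unwinding the definition of $\langle\cdot,\cdot\rangle_{i,\mathbf x}$ gives $\mathbf u^* Q_i(\mathbf x)\mathbf u = \|\mathbf V_i(\mathbf x)\|^2\,\|\mathbf u\|_{i,\mathbf x}^2$. Moreover the $\mathbf r_i$ are independent, since the $\mathbf f_i$ are and the conditioning is coordinate-wise.

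Then I would invoke Theorem~\ref{WWeil} for the independent centered Gaussian rows $\mathbf r_i$: the conditional expectation of $|\det DF(\mathbf x)|$ equals $n!\,\mathrm{MV}_n(\tilde C_1,\dots,\tilde C_n)$, where, by the symmetric-case identity and the half-normal computation recalled just before Theorem~\ref{WWeil}, $\tilde C_i$ has support function $h_{\tilde C_i}(\mathbf u) = \tfrac12\,\bigE_{\mathbf r\sim\mathscr N(0,Q_i(\mathbf x))}|\langle\mathbf u,\mathbf r\rangle| = \tfrac1{\sqrt{2\pi}}\sqrt{\mathbf u^* Q_i(\mathbf x)\mathbf u} = \tfrac{\|\mathbf V_i(\mathbf x)\|}{\sqrt{2\pi}}\|\mathbf u\|_{i,\mathbf x} = \|\mathbf V_i(\mathbf x)\|\, h_{C_i(\mathbf x)}(\mathbf u)$. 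Hence $\tilde C_i = \|\mathbf V_i(\mathbf x)\|\,C_i(\mathbf x)$, and by multilinearity and homogeneity of the mixed volume, $\mathrm{MV}_n(\tilde C_1,\dots,\tilde C_n) = \bigl(\prod_i \|\mathbf V_i(\mathbf x)\|\bigr)\,\mathrm{MV}_n(C_1(\mathbf x),\dots,C_n(\mathbf x))$. Substituting this and the formula for $p_{F(\mathbf x)}(0)$ into the Kac--Rice integral, the product $\prod_i\|\mathbf V_i(\mathbf x)\|$ cancels its reciprocal from the density, leaving exactly the claimed identity; the interchange of expectation and integration is justified by Tonelli's theorem, since every integrand is nonnegative.

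The step I expect to be the main obstacle is the bookkeeping around the conditional law of the Jacobian and its identification with the scaled Fubini--Study Gram matrix --- making sure that the projector $P_i$, the differential of the Veronese projection $[\mathbf V_i]$, and the factor $\|\mathbf V_i(\mathbf x)\|^2$ all appear in the right places --- together with confirming that the Kac--Rice regularity conditions (almost surely finitely many, non-degenerate zeros) genuinely hold for these real-analytic Gaussian fields, which is where a Bulinskaya-type argument may be needed.
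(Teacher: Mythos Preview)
Your proposal is correct and follows essentially the same route as the paper: the paper uses the coarea formula on the solution variety (equivalent to your Kac--Rice formulation) to reduce $E_{\mathscr D}$ to an integral over $\mathscr D$ of an expected absolute determinant with independent Gaussian rows, then applies Theorem~\ref{WWeil} and the half-normal computation to obtain the support functions $h_{C_i(\mathbf x)}(\mathbf u)=\tfrac{1}{\sqrt{2\pi}}\|\mathbf u\|_{i,\mathbf x}$. The only cosmetic difference is that the paper normalizes by $\|\mathbf V_i(\mathbf x)\|$ upfront---working directly with the rows $\mathbf f_i\,D[\mathbf V_i](\mathbf x)$ and absorbing the extra Gaussian directions as a $\sqrt{2\pi}^{-n}$ factor---rather than carrying the $\|\mathbf V_i(\mathbf x)\|$ factors through $\tilde C_i$ and cancelling them against $p_{F(\mathbf x)}(0)$ at the end, as you do.
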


\begin{proof}
For short, let $\mathscr F = \mathscr F_1 \times \dots \times \mathscr F_n$.
Let $\mathscr S=\{ (\mathbf f, \mathbf x) \in \mathscr F \times \mathscr D: \mathbf f_i \mathbf V_i(\mathbf x)=0, \ i=1, \dots, n\}$ be the solution variety,
	$\pi_1: \mathscr S \rightarrow \mathscr F$ and $\pi_2: \mathscr S \rightarrow \mathbb R^n$ the canonical projections. Let $\mathscr F_{\mathbf x} = \pi_1 \circ \pi_2^{-1}(\mathbf x)$ be the fiber above $\mathbf x$.  The coarea formula 
	\cites{BCSS,Malajovich-nonlinear} yields
\begin{eqnarray*}
	E(\mathscr F_1, \dots, \mathscr F_n) &=& 
	\int_{\mathscr F} \# \pi_1^{-1} (\mathbf f) \,
	\frac{1}{\sqrt{2\pi}^{\dim(\mathscr F)}}
	\dd\mathscr F(\mathbf f) \\
	&=&
	\int_{\mathscr D} 
	\changed{\dd}\mathbb R^n(\mathbf x)\, 
	\int_{\mathscr F_{\mathbf x}}
	NJ^{-1}(\mathbf f, \mathbf x)\,
	\frac{1}{\sqrt{2\pi}^{\dim(\mathscr F)}}
	\dd\mathscr F_{\mathbf x}(\mathbf f) 
\end{eqnarray*}
	where $NJ$ is the normal Jacobian of the local implicit function $G:T_{\mathbf f} \mathscr F \rightarrow T_{\mathbf x} \mathbb R^n$,
	$\mathbf x= G(\mathbf f)$.
By differentiating
	$\mathbf f_{it} \mathbf V_i(\mathbf x_t) \equiv 0$
	at $t=0$,
\[
	\mathbf f_i D\mathbf V_i(\mathbf x)
	\dot {\mathbf x} =
	- \dot {\mathbf f_i} \mathbf V_i(\mathbf x), \hspace{1em} i=1, \dots, n.
\]
This is the same as:
\[
	\frac{1}{\| \mathbf V_i(\mathbf x)\|} 
	\mathbf f_i D\mathbf V_i(\mathbf x)
	\dot {\mathbf x} =
	- 
	\frac{1}{\|\mathbf V_i(\mathbf x)\|} K_i(\cdot, \mathbf x)^*
	(\dot {\mathbf f_i}), \hspace{3em} i=1, \dots, n.
\]
	Denote the condition matrix by 
\[
	M(\mathbf f, \mathbf x) = 
	\begin{pmatrix}
		&\vdots& \\
		&	\frac{1}{\| \mathbf V_i(\mathbf x)\|} 
		\mathbf f_i D\mathbf V(\mathbf x)&\\
		&\vdots&
	\end{pmatrix}
	.
\]
The derivative of the implicit function $G$ can be written as
	\[
		DG(\mathbf f, \mathbf x) =
	-M(\mathbf f, \mathbf x)^{-1} 
	\bigoplus_i \frac{1}{\| V_i(\mathbf x)\|} K_i(\cdot, \mathbf x)^*
	.
\]
The normal Jacobian is therefore
	\changed{
\[
	NJ(\mathbf f, \mathbf x) =
	\sqrt{\det\left(	M(\mathbf f, \mathbf x)^{-1} 
	\diag{\frac{K_i(\mathbf x, \mathbf x)}{\| \mathbf V_i(\mathbf x) \|^2}}
	M(\mathbf f, \mathbf x)^{-*} \right)}
.
\]
Since $\frac{K_i(\mathbf x, \mathbf x)}{\| \mathbf V_i(\mathbf x) \|^2}=1$,
we obtain that
\[
	NJ(\mathbf f, \mathbf x) =
	\left| \det \left( M(\mathbf f, \mathbf x)^{-1} \right) \right|.
\]}
	In order to integrate $NJ(\mathbf f, \mathbf x)^{-1}$ over the fiber $\pi_1 \circ \pi_2(\mathbf x)^{-1}$, we first remark that for $\mathbf f_i$ in the fiber,
\[
	M(\mathbf f, \mathbf x) = 
	\begin{pmatrix}
		&\vdots& \\
		&\mathbf f_i D[\mathbf V_i](\mathbf x)&\\
		&\vdots&
	\end{pmatrix}
	.
\]
For $\mathbf f_i$ orthogonal to the fiber, that is colinear to $\mathbf V_i(\mathbf x)$,
\[
\mathbf f_i D[\mathbf V_i](\mathbf x)
=
\mathbf f_i P_{\mathbf V_i(\mathbf x)^{\perp}} \frac{1}{\|\mathbf V_i(\mathbf x)\|}
D\mathbf V_i(\mathbf x) = 0.
\]
Introducing $n$ extra Gaussian variables,
\begin{eqnarray*}
	E_{\mathscr D}(\mathscr F_1, \dots, \mathscr F_n) &=& 
	\int_{\mathscr D} 
	\changed{\dd}\mathbb R^n(\mathbf x) 
	\int_{\mathscr F_{\mathbf x}}
	NJ^{-1}(\mathbf f, \mathbf x)
	\frac{1}{\sqrt{2\pi}^{\dim(\mathscr F)}}
	\dd\mathscr F_{\mathbf x}(\mathbf f) 
\\
	&=&
	\frac{1}{\sqrt{2\pi}^{n}}
	\int_{\mathscr D} 
	\changed{\dd}\mathbb R^n(\mathbf x) 
	\int_{\mathscr F}
	NJ^{-1}(\mathbf f, \mathbf x)
	\frac{1}{\sqrt{2\pi}^{\dim(\mathscr F)}}
	\dd\mathscr F(\mathbf f) 
\\
	&=&
	\frac{1}{\sqrt{2\pi}^{n}}
	\int_{\mathscr D} 
	\changed{\dd}\mathbb R^n(\mathbf x) 
	\expected{ \mathbf f \sim \mathscr N(0,I; \mathscr F)}{
\left|		\det \begin{pmatrix}
	\vdots \\
		\mathbf f_i D[\mathbf V_i](\mathbf x)\\
	\vdots
\end{pmatrix} \right|}
	.
\end{eqnarray*}
The rows $\mathbf f_i D[\mathbf V_i](\mathbf x)$ are independent, so
we apply Theorem~\ref{WWeil}:
\[
E(\mathscr F_1, \dots, \mathscr F_n) = 
	\frac{n!}{\sqrt{2\pi}^{n}}
	\int_{\mathscr D} 
	\changed{\dd}\mathbb R^n(\mathbf x) \,
\mathrm{MV}_n( C_1(\mathbf x), \dots, C_n(\mathbf x))
\]
where $C_i(\mathbf x)$ is the convex body with support function
\[
	h_{C_i(\mathbf x)}(\mathbf u) = 
	\frac{1}{2} 
	\expected{\mathbf f_i \sim \mathscr N(0,I; \mathscr F_i)}{\left| \langle \mathbf u,
	\changed{(\mathbf f_i D[\mathbf V_i](\mathbf x))^T} \rangle \right|}
=
	\frac{1}{2} 
	\expected{\mathbf f_i \sim \mathscr N(0,I; \mathscr F_i)}{\left| 
	\mathbf f_i D[\mathbf V_i](\mathbf x) \mathbf u \right|}
.
\]
Since $\mathbf f_i D[\mathbf V_i](\mathbf x) \mathbf u$ is a Gaussian with
zero average and variance $\|\mathbf u\|_{i,\mathbf x}^2$, 
\[
h_{C_i}(\mathbf u) = 
\frac{1}{\sqrt{2\pi}} \|\mathbf u\|_{i,\mathbf x}
.
\]
\end{proof}

\subsection{Proof of Theorem~\ref{integral1}}
The following technical fact will be needed:
\begin{lemma}\label{tech-lemma}
	\begin{equation}\label{volumes}
	\frac{n!}{(2\pi)^n} \vol_n(B^n) \vol_n (\mathbb R \mathbb P^n) = 1
	\end{equation}
\end{lemma}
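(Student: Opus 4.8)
The plan is to reduce \eqref{volumes} to the Legendre duplication formula for the Gamma function after substituting the classical closed forms for the two volumes. First I would recall that the unit ball $B^n\subset\mathbb R^n$ satisfies $\vol_n(B^n)=\pi^{n/2}/\Gamma(\tfrac n2+1)$, and that $\mathbb R\mathbb P^n$, equipped with the metric induced by the Riemannian double cover $S^n\to\mathbb R\mathbb P^n$ coming from the unit sphere (the same normalization as the real Fubini--Study metric used in Section~3), satisfies $\vol_n(\mathbb R\mathbb P^n)=\tfrac12\vol_n(S^n)=\pi^{(n+1)/2}/\Gamma(\tfrac{n+1}2)$.

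Next I would substitute these into the left-hand side of \eqref{volumes}. Collecting the powers of $\pi$ --- noting $-n+\tfrac n2+\tfrac{n+1}2=\tfrac12$ --- and the factors of $2$, the expression becomes
\[
	\frac{n!}{(2\pi)^n}\,\vol_n(B^n)\,\vol_n(\mathbb R\mathbb P^n)
	=\frac{n!\,\sqrt{\pi}}{2^n\,\Gamma\!\left(\tfrac n2+1\right)\Gamma\!\left(\tfrac{n+1}2\right)}.
\]
Then I would invoke Legendre's duplication formula $\Gamma(z)\Gamma(z+\tfrac12)=2^{1-2z}\sqrt{\pi}\,\Gamma(2z)$ with $z=\tfrac{n+1}2$, so that $z+\tfrac12=\tfrac n2+1$ and $2z=n+1$; this gives $\Gamma(\tfrac{n+1}2)\Gamma(\tfrac n2+1)=2^{-n}\sqrt{\pi}\,n!$. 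Substituting into the denominator above cancels every factor and leaves $1$, which is exactly \eqref{volumes}.

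Since every ingredient is a textbook identity, I do not expect a genuine obstacle; the only points requiring care are the normalization of the metric on $\mathbb R\mathbb P^n$ (half the volume of the unit $n$-sphere, consistent with the Fubini--Study convention used elsewhere) and the bookkeeping of the half-integer arguments of $\Gamma$. As a self-contained alternative that avoids the Gamma function, I would instead argue by induction on $n$: writing $r_n$ for the left-hand side of \eqref{volumes}, the standard recursions $\vol_n(B^n)=\tfrac{2\pi}{n}\vol_{n-2}(B^{n-2})$ and $\vol_n(S^n)=\tfrac{2\pi}{n-1}\vol_{n-2}(S^{n-2})$ combine to give $r_n=r_{n-2}$, while the base cases $r_0=1$ and $r_1=\tfrac1{2\pi}\cdot 2\cdot\pi=1$ close the induction.
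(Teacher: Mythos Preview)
Your main argument is correct and follows the paper's own proof essentially verbatim: substitute the closed-form volumes of $B^n$ and $\mathbb R\mathbb P^n=\tfrac12 S^n$, collect powers of $\pi$ and $2$, and finish with Legendre's duplication formula (the paper takes $t=\tfrac n2+1$ where you take $z=\tfrac{n+1}2$, which is the same identity). The inductive alternative you append is also sound and is a pleasant bonus not present in the paper.
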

\begin{proof}
Recall that
\[
\vol_n(B^n) = \frac{\pi^{\frac{n}{2}}}{\Gamma\left(\frac{n}{2}+1 \right)}
	\hspace{1em}	\text{and} \hspace{1em}	
	\vol_n (\mathbb R \mathbb P^n) =
	\frac{1}{2} \vol_n(S^n) = \frac{n+1}{2}\frac{\pi^{\frac{n+1}{2}}}{\Gamma\left(\frac{n+1}{2}+1 \right)}
.
\]
	The left-hand-side of \eqref{volumes} is therefore
\[
	\frac{(n+1)!}{2^{n+1}} 
	\frac{\sqrt{\pi}}
	{\Gamma\left(t \right)
	\Gamma\left(t+\frac{1}{2} \right)}
.
\]
	with $t = \frac{n}{2}+1$. Legendre's duplication formula
	yields
\[
	\Gamma\left(t \right)
	\Gamma\left(t+\frac{1}{2} \right) = 
	\frac{\sqrt{\pi}}{2^{n+1}}\Gamma(2t)
=
	\frac{(n+1)!\sqrt{\pi}}{2^{n+1}}
,
\]
completing the proof.
\end{proof}

\begin{proof}[Proof of Theorem~\ref{integral1}]
Lemma~\ref{MV-integral} with $\mathscr F_1 = \dots = \mathscr F_n = \mathscr F$ becomes
\[
	E_{\mathscr D}(\mathscr F) = \frac{n!}{\sqrt{2\pi}^n}
	\int_{\mathscr D} \dd \mathbb R^n V_n(C(\mathbf x))
\]
	where $C(\mathbf x)$ has support function
\[
	h_{C(\mathbf x)}(\mathbf u) = 
\frac{1}{\sqrt{2\pi}} \|\mathbf u\|_{\mathbf x}
=
	\|D[V](\mathbf x) \mathbf u\|
\]
	From Equation~\eqref{volC}, the convex body $C(\mathbf x)$ is an ellipsoid with volume
	\[
		\frac{\sqrt{ \det(D[V](\mathbf x)^*\  D[V](\mathbf x) ) }}
	{\sqrt{2\pi}^n} 
		\vol_n B^n.
	\]
Removing the volume of the $n$-ball from the integral, we are left
with the Jacobian of the Veronese embedding. Thus,
	\begin{eqnarray*}
		E_{\mathscr D}(\mathscr F) &=& 
	\frac{n! \vol_n(B^n)}{(2\pi)^n}
	\int_{\mathscr D} \sqrt{ \det(D[V](\mathbf x)^*\, D[V](\mathbf x) ) } \dd \mathbb R^n(x)
\\
	&=&
		\frac{n! \vol_n(B^n)}{(2\pi)^n} \vol_n (\changed{\mathscr V}).
	\end{eqnarray*}
	From Lemma~\ref{tech-lemma},
\[
E_{\mathscr D}(\mathscr F) = 
		\frac{\vol_n (\mathscr V)}{\vol_n (\mathbb R\mathbb P^{n})}
.
\]
\end{proof}

\section{Additivity Lemma and proof of main results}

The main theorems in this note hold for function spaces with a monomial basis.
This differs from the construction in \ocite{Fewspaces} which deals
with certain spaces of 
complex analytic functions. The \changed{lemma} below holds in both cases,
a general complex analytic setting (with the Hermitian metric associated to
a Kähler structure) and a
real setting (monomial bases required). We prove the real setting below.

\begin{lemma}\label{additivity-lemma}
	Assume that 
$\mathscr E_{A,\alpha}= 
\mathscr E_{B, \beta} \mathscr E_{C,\gamma}$.
	Then, $
\langle \cdot , \cdot \rangle_{\mathscr A,\mathbf x}
=
\langle \cdot , \cdot \rangle_{\mathscr B,\mathbf x}+
\langle \cdot , \cdot \rangle_{\mathscr C,\mathbf x}
$.
\end{lemma}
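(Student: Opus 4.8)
The plan is to compute both sides of the claimed identity explicitly in terms of reproducing kernels and their derivatives, using the fact that the pulled-back Fubini–Study metric at a point $\mathbf x$ is determined by the $2$-jet of the reproducing kernel at $(\mathbf x, \mathbf x)$. Concretely, for a space $\mathscr E_{A,\alpha}$ with kernel $K_{A,\alpha}(\mathbf x,\mathbf y) = \sum_{\mathbf a} \alpha_{\mathbf a}^2 e^{\mathbf a(\mathbf x+\mathbf y)}$, the pull-back by $[\mathbf V_{A,\alpha}]$ of the real Fubini–Study metric has the standard Bergman-type expression
\[
\langle \mathbf u, \mathbf v\rangle_{\mathscr A, \mathbf x}
= \frac{1}{K(\mathbf x,\mathbf x)}\,\mathbf u^T \big(\partial_{\mathbf x}\partial_{\mathbf y} K\big)(\mathbf x,\mathbf x)\,\mathbf v
- \frac{1}{K(\mathbf x,\mathbf x)^2}\,\big(\mathbf u^T \partial_{\mathbf x}K(\mathbf x,\mathbf x)\big)\big(\partial_{\mathbf y}K(\mathbf x,\mathbf x)\,\mathbf v\big),
\]
and the same with $B,\beta$ and $C,\gamma$. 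So the first step is to record this formula (it follows from differentiating $\|\mathbf V_i(\mathbf x)\|$ and the projection $P_{\mathbf V_i(\mathbf x)^\perp}$ already appearing in the proof of Lemma~\ref{MV-integral}).

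The second step is to feed in the multiplicativity of the kernel. The definition of Aronszajn multiplication $\mathscr E_{A,\alpha} = \mathscr E_{B,\beta}\mathscr E_{C,\gamma}$ says $A = B+C$ and $\gamma_{\mathbf a}^2 = \sum_{\mathbf b+\mathbf c=\mathbf a}\alpha_{\mathbf b}^2\beta_{\mathbf c}^2$; unwinding the sums shows the kernels multiply:
\[
K_{A,\alpha}(\mathbf x,\mathbf y) = K_{B,\beta}(\mathbf x,\mathbf y)\,K_{C,\gamma}(\mathbf x,\mathbf y).
\]
(Here I am using $A,\alpha$ for the product space and $B,\beta$, $C,\gamma$ for the factors, matching the statement's $\mathscr A, \mathscr B, \mathscr C$.) Then I would apply the Leibniz rule to compute $\partial_{\mathbf x} K_A$, $\partial_{\mathbf y}K_A$ and $\partial_{\mathbf x}\partial_{\mathbf y}K_A$ at $(\mathbf x,\mathbf x)$ in terms of the corresponding derivatives of $K_B$ and $K_C$, and substitute into the Bergman formula for $\langle\cdot,\cdot\rangle_{\mathscr A,\mathbf x}$.

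The third and decisive step is the algebraic cancellation: after substitution, the cross terms in the Leibniz expansion of $\partial_{\mathbf x}\partial_{\mathbf y}(K_B K_C)$ — the ones mixing a $K_B$-derivative with a $K_C$-derivative — must match exactly the cross terms produced in the second (rank-one) piece of the Bergman formula, leaving precisely $\langle\cdot,\cdot\rangle_{\mathscr B,\mathbf x}+\langle\cdot,\cdot\rangle_{\mathscr C,\mathbf x}$. This is the step I expect to be the main obstacle: it is a bookkeeping identity among four terms, and one has to be careful that the normalizations $1/K$ and $1/K^2$ combine correctly given $K_A = K_B K_C$ (so $1/K_A = 1/(K_B K_C)$ and $1/K_A^2 = 1/(K_B^2 K_C^2)$). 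A clean way to organize it is to work with $\log$-derivatives: writing $g_{\mathscr A}(\mathbf x) = \nabla_{\mathbf x}\log K_A(\mathbf x,\mathbf x)$-type quantities, the pull-back metric becomes a Hessian of $\log K$ restricted to the diagonal, and $\log K_A = \log K_B + \log K_C$ makes additivity immediate. I would present it this way: identify $\langle\cdot,\cdot\rangle_{\mathscr A,\mathbf x}$ with (a constant multiple of) the bilinear form $\mathbf u,\mathbf v \mapsto \mathbf u^T H\big(\log K_A\big)(\mathbf x)\,\mathbf v$ where $H(\log K_A)$ denotes the relevant second-derivative tensor along the diagonal, invoke linearity of $H$ and of $\mathbf x\mapsto \log K$, and conclude. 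The only remaining care is to confirm that the "monomial basis / real setting" hypothesis is what licenses the identification of the pulled-back real Fubini–Study metric with this Hessian-of-log-kernel form, which is exactly the computation already carried out inside the proof of Lemma~\ref{MV-integral}.
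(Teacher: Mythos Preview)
Your proposal is correct and, in its ``clean way'' formulation, is exactly the paper's proof: the paper introduces the potential $\varphi_{\mathscr A}(\mathbf x)=\tfrac12\log\|\mathbf V_{\mathscr A}(\mathbf x)\|^2=\tfrac12\log K_{A,\alpha}(\mathbf x,\mathbf x)$, verifies that $\langle\mathbf u,\mathbf v\rangle_{\mathscr A,\mathbf x}=\tfrac12 D^2\varphi_{\mathscr A}(\mathbf x)(\mathbf u,\mathbf v)$, and then uses $\|\mathbf V_{\mathscr A}\|^2=\|\mathbf V_{\mathscr B}\|^2\|\mathbf V_{\mathscr C}\|^2$ (equivalently $K_A=K_BK_C$) to conclude $\varphi_{\mathscr A}=\varphi_{\mathscr B}+\varphi_{\mathscr C}$ and hence additivity of the Hessians. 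Your first route via the Bergman formula and explicit Leibniz cancellation would also work, but the paper skips it in favor of the log-potential shortcut you identify at the end.
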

\begin{proof}
	For any space $\mathscr A= \mathscr E_{A,\alpha}$ of exponential sums, consider the convex
potential
\[
	\varphi_{\mathscr A}(\mathbf x) = \frac{1}{2} \log(\| \mathbf V_{\changed{\mathscr A}}(\mathbf x)\|^2)
.
\]
	Its derivative 
\[
	\mathbf m_{\mathscr A}(\mathbf x) \defeq D\varphi_{\mathscr A}(\mathbf x) = 
	\sum_{\mathbf a \in A}
	\frac{\alpha_{\mathbf a}^2 
	e^{2\mathbf a \mathbf x}}{\|\changed{\mathbf V_{\mathscr A}}(\mathbf x)\|^2}
	\mathbf a
\]
is known as the {\em momentum map}. We compute the second derivative:
\[
		\frac{1}{2}D^2 \varphi_{\mathscr A}(\mathbf x): \mathbf u, \mathbf v \mapsto 
	\sum_{\mathbf a \in A}
	\frac{\alpha_{\mathbf a}^2 
	e^{2\mathbf a \mathbf x}}{\|\changed{\mathbf V_{\mathscr A}}(\mathbf x)\|^2}
	(\mathbf a \mathbf u) (\mathbf a \mathbf v) 
	-
	(\mathbf m_{\mathscr A}(\mathbf x)\mathbf u) 
	(\mathbf m_{\mathscr A}(\mathbf x)\mathbf v) 
\]
	\changed{The right-hand side is precisely equal to 
\[
	\frac{1}{\|\mathbf V_{\mathscr A}(\mathbf x)\|^2} 
	\mathbf v^*D\mathbf V_{\mathscr A}(\mathbf x)^* 
	\left(I-\frac{1}{\|\mathbf V_{\mathscr A}(\mathbf x)\|^2} \mathbf V_{\mathscr A}(\mathbf x) \mathbf V_{\mathscr A}(\mathbf x)^*\right) D\mathbf V_{\mathscr A}(\mathbf x) \mathbf u. 
\]
Hence, we deduce the identity
	\[
\langle \mathbf u, \mathbf v \rangle_{\mathcal A, \mathbf x}
=
	\frac{1}{2}D^2 \varphi_{\mathscr A}(\mathbf x) (\mathbf u, \mathbf v)
.
	\]}
	Now, let $\mathscr A=\mathscr E_{A,\alpha}$, $\mathscr B=\mathscr E_{B,\beta}$, $\mathscr C=\mathscr E_{C,\gamma}$. Since
	\[
		\| \mathbf V_{\mathscr A}\|^2
		=
		\|  \mathbf V_{\mathscr B}\|^2
		\|  \mathbf V_{\mathscr C}\|^2,
	\]
	we have $
	\varphi_{\mathscr A}(\mathbf x)=
	\varphi_{\mathscr B}(\mathbf x)+
	\varphi_{\mathscr C}(\mathbf x)$,
hence
	$\mathbf m_{\mathscr A}(\mathbf x)=
	\mathbf m_{\mathscr B}(\mathbf x)+
	\mathbf m_{\mathscr C}(\mathbf x)$, and finally
	\[
\langle \cdot , \cdot \rangle_{\mathscr A,\mathbf x}
=
\langle \cdot , \cdot \rangle_{\mathscr B,\mathbf x}+
\langle \cdot , \cdot \rangle_{\mathscr C,\mathbf x}
.
\]
	\end{proof}

\begin{proof}[Proof of Theorem~\ref{gen-square-root}]
For each $i$ and for each $\mathbf x \in \mathscr D \changed{\, \subset \,} \mathbb R^n$,
	let $C_i(\mathbf x)$ be the convex body with 
support function
	\[
		h_{C_i(\mathbf x)}(\mathbf u) = \frac{1}{\sqrt{2\pi}} \| \mathbf u\|_{i, \mathbf x}
	\]
	where $\| \cdot \|_{i, \mathbf x} = \sqrt{\langle \cdot, \cdot \rangle_{i, \mathbf x}}$ is the norm associated to $\mathscr F_i$.
	By Lemma~\ref{MV-integral},
\[
	E_{\mathscr D}(\mathscr F_1, \dots, \mathscr F_n) = 
	\frac{n!}{\sqrt{2\pi}^{n}}
	\int_{\mathscr D} 
	\changed{\dd}\mathbb R^n(\mathbf x) \,
\mathrm{MV}_n( C_1(\mathbf x), \dots, C_n(\mathbf x))
.
\]
	By Lemma~\ref{additivity-lemma}, the inner product associated to
	$\mathscr F_i^d$ is $d\, \langle \cdot, \cdot \rangle_{i, \mathbf x}$,
	and the associated norm is $\sqrt{d}\, \| \cdot \|_{i, \mathbf x}$.
	Lemma~\ref{MV-integral} applied to $\mathscr F_1^{d_1}, \dots,
	\mathscr F_n^{d_n}$ yields:
\[
	E_{\mathscr D}(\mathscr F_1^{d_1}, \dots, \mathscr F_n^{d_n}) = 
	\frac{n!}{\sqrt{2\pi}^{n}}
	\int_{\mathscr D} 
	\changed{\dd}\mathbb R^n(\mathbf x) 
	\, \mathrm{MV}_n\left( \sqrt{d_1} C_1(\mathbf x), \dots, \sqrt{d_n} C_n(\mathbf x)\right)
\]
	because $\sqrt{d_1} C_1(\mathbf x)$ has support function
	\[
		h_{\sqrt{d_i} C_i(\mathbf x)}(\mathbf u) = \frac{\sqrt{d_i}}{\sqrt{2\pi}} \| \mathbf u\|_{i, \mathbf x}.
	\]
By the linearity property of the mixed volume,
	\begin{eqnarray*}
		E_{\mathscr D}(\mathscr F_1^{d_1}, \dots, \mathscr F_n^{d_n}) &=& 
		\frac{n! \sqrt{d_1 d_2 \changed{\cdots} d_n}}{\sqrt{2\pi}^{n}}
	\int_{\mathscr D} 
		\changed{\dd}\mathbb R^n(\mathbf x) \,
	\mathrm{MV}_n( C_1(\mathbf x), \dots, C_n(\mathbf x))\\
		&=&
		\sqrt{d_1 d_2 \changed{\cdots} d_n} E_{\mathscr D}(\mathscr F_1, \dots, \mathscr F_n) 
.
	\end{eqnarray*}
\end{proof}

\begin{remark}
	In the complex case described in \ocite{Fewspaces}, we do not compute
	the second derivative of the potential. Instead,
the Fubini Study metric is
	\[
	\omega_{\mathbf x} = \frac{\sqrt{-1}}{2} \partial \bar \partial
	\log K(\mathbf x, \mathbf x).
	\]
	Therefore, if $K(\mathbf x, \mathbf x) = \sum_{\mathbf a} |\psi_{\mathbf a}(\mathbf x)|^2$ with the $\psi_{\mathbf a}(\mathbf x)$ {\em analytic},
	only the holomorphic first derivatives of the 
	$\psi_{\mathbf a}$ (and the anti-holomorphic first derivatives 
	of $\bar \psi_{\mathbf a}$) appear in the expression of 
	$\omega_{\mathbf x}$ and its associated Hermitian metric.
\end{remark}

\begin{proof}[Proof of Theorem~\ref{sub-additive}]
	Let $\langle \cdot , \cdot \rangle_{i, \mathbf x}$ be the inner produc
	associated to $\mathscr F_i$, with $i=1,\dots, n$ and $\mathscr F_n =
	\mathscr G \mathscr H$. By Lemma~\ref{additivity-lemma},
\[
\langle \cdot , \cdot \rangle_{n, \mathbf x} = 
\langle \cdot , \cdot \rangle_{\mathscr G, \mathbf x} 
	+ \langle \cdot , \cdot \rangle_{\mathscr H, \mathbf x}
\]
where the two right inner products are the ones associated to $\mathscr G$ and
$\mathscr H$. In particular, associated norms satisfy
	\[
\| \mathbf u\|_{n, \mathbf x}^2
	\changed{=}
\| \mathbf u\|_{\mathscr G, \mathbf x}^2
+
\| \mathbf u\|_{\mathscr H, \mathbf x}^2
.
\]
	By the \changed{Triangle} inequality,
	\begin{equation}\label{Triangle}
\| \mathbf u\|_{n, \mathbf x}
\le
\| \mathbf u\|_{\mathscr G, \mathbf x}
+
\| \mathbf u\|_{\mathscr H, \mathbf x}
	\end{equation}
%with equality if and only if $\| \mathbf u\|_{\mathscr G, \mathbf x} \| \mathbf u\|_{\mathscr H, \mathbf x} = 0$.

	Let $C_n(\mathbf x)$, $C_{\mathscr G}(\mathbf x)$ and $C_{\mathscr H}(\mathbf x)$ be the convex bodies with respective support functions
\begin{eqnarray*}
	h_{C_i(\mathbf x)}(\mathbf u) &=& \frac{1}{\sqrt{2\pi}} \| \mathbf u\|_{i, \mathbf x} \hspace{1em},\\
	h_{C_{\mathscr G}(\mathbf x)}(\mathbf u) &=& \frac{1}{\sqrt{2\pi}} \| \mathbf u\|_{\mathscr G, \mathbf x} \hspace{1em}\text{, and}\\
	h_{C_{\mathscr H}(\mathbf x)}(\mathbf u) &=& \frac{1}{\sqrt{2\pi}} \| \mathbf u\|_{\mathscr H, \mathbf x}\hspace{1em}.
\end{eqnarray*}
	Then 
\[
C_{n}(\mathbf x) \changed{\, \subset \,} C_{\mathscr G}(\mathbf x) + C_{\mathscr H}(\mathbf x)
\]
%and equality holds if and only if for all $\mathbf u$, 
%	$\| \mathbf u\|_{\mathscr G, \mathbf x}
%\| \mathbf u\|_{\mathscr H, \mathbf x} \equiv 0$.
%For this to happen, at least one of the factors must vanish uniformly for all
%$\mathbf u$. 

From the monotonicity and linearity properties of the mixed volume,
	\begin{eqnarray*}
	\mathrm{MV}_n(C_1(\mathbf x), \dots, C_n(\mathbf x))
		&\le&
	\mathrm{MV}_n(C_1(\mathbf x), \dots, C_{n-1}(\mathbf x),  C_{\mathscr G}(\mathbf x) + C_{\mathscr H}(\mathbf x))\\
		&\le&
	\mathrm{MV}_n(C_1(\mathbf x), \dots, C_{n-1}(\mathbf x),  C_{\mathscr G}(\mathbf x) )\\
	&&
	+
		\mathrm{MV}_n(C_1(\mathbf x), \dots, C_{n-1}(\mathbf x),  C_{\mathscr H}(\mathbf x)).
	\end{eqnarray*}
Theorem~\ref{sub-additive} follows now directly from Lemma~\ref{MV-integral}.
\end{proof}
\changed{
	\begin{remark} If the two norms on the right-hand side of \eqref{Triangle} are multiple of each other, then \eqref{Triangle} becomes an equality.
The same happens in the statement of Theorem~\ref{sub-additive}.
\end{remark}
}
\renewcommand{\MR}[1]{}
\begin{bibsection}

\begin{biblist}

\bib{Aronszajn}{article}{
   author={Aronszajn, N.},
   title={Theory of reproducing kernels},
   journal={Trans. Amer. Math. Soc.},
   volume={68},
   date={1950},
   pages={337--404},
   issn={0002-9947},
   review={\MR{51437}},
   doi={10.2307/1990404},
}
\bib{Bernstein}{article}{
   author={Bernstein, D. N.},
   title={The number of roots of a system of equations},
%   language={Russian},
   journal={Funkcional. Anal. i Prilo\v{z}en.},
   volume={9},
   date={1975},
   number={3},
   pages={1--4},
   issn={0374-1990},
   review={\MR{0435072}},
}

\bib{BCSS}{book}{
   author={Blum, Lenore},
   author={Cucker, Felipe},
   author={Shub, Michael},
   author={Smale, Steve},
   title={Complexity and real computation},
   publisher={Springer-Verlag, New York},
   date={1998},
   pages={xvi+453},
   isbn={0-387-98281-7},
   review={\MR{1479636}},
}

\bib{BBLM}{article}{
   author={Breiding, Paul},
   author={Bürgisser, Peter},
   author={Lerario, Antonio},
   author={Mathis, Léo},
	title={The zonoid algebra, generalized mixed volumes and random determinants},
	eprint={https://arxiv.org/abs/2109.14996},
	year={2021}
}

\bib{Kostlan}{article}{
   author={Kostlan, E.},
   title={On the distribution of roots of random polynomials},
   conference={
      title={From Topology to Computation: Proceedings of the Smalefest},
      address={Berkeley, CA},
      date={1990},
   },
   book={
      publisher={Springer, New York},
   },
   date={1993},
   pages={419--431},
   review={\MR{1246137}},
}

\bib{Kostlan2}{article}{
   author={Kostlan, Eric},
   title={On the expected number of real roots of a system of random
   polynomial equations},
   conference={
      title={Foundations of computational mathematics},
      address={Hong Kong},
      date={2000},
   },
   book={
      publisher={World Sci. Publ., River Edge, NJ},
   },
   date={2002},
   pages={149--188},
   review={\MR{2021981}},
}

\bib{Malajovich-nonlinear}{book}{
   author={Malajovich, Gregorio},
   title={Nonlinear equations},
   series={Publica\c{c}\~oes Matem\'aticas do IMPA. [IMPA Mathematical
   Publications]},
   note={With an appendix by Carlos Beltr\'an, Jean-Pierre Dedieu, Luis Miguel
   Pardo and Mike Shub;
   28$^{\rm o}$ Col\'oquio Brasileiro de Matem\'atica. [28th Brazilian
   Mathematics Colloquium]},
   publisher={Instituto Nacional de Matem\'atica Pura e Aplicada (IMPA), Rio
   de Janeiro},
   date={2011},
   pages={xiv+177},
   isbn={978-85-244-0329-3},
   review={\MR{2798351}},
}

\bib{Fewspaces}{article}{
   author={Malajovich, Gregorio},
   title={On the expected number of zeros of nonlinear equations},
   journal={Found. Comput. Math.},
   volume={13},
   date={2013},
   number={6},
   pages={867--884},
   issn={1615-3375},
   review={\MR{3124943}},
   doi={10.1007/s10208-013-9171-y},
}

\bib{MRHigh}{article}{
   author={Malajovich, Gregorio},
   author={Rojas, J. Maurice},
   title={High probability analysis of the condition number of sparse
   polynomial systems},
   journal={Theoret. Comput. Sci.},
   volume={315},
   date={2004},
   number={2-3},
   pages={524--555},
   issn={0304-3975},
   review={\MR{2073064 (2005e:34166)}},
   doi={10.1016/j.tcs.2004.01.006},
}

\bib{Rojas-Park-City}{article}{
   author={Rojas, J. Maurice},
   title={On the average number of real roots of certain random sparse
   polynomial systems},
   conference={
      title={The mathematics of numerical analysis},
      address={Park City, UT},
      date={1995},
   },
   book={
      series={Lectures in Appl. Math.},
      volume={32},
      publisher={Amer. Math. Soc., Providence, RI},
   },
   date={1996},
   pages={689--699},
   review={\MR{1421361}},
}

\bib{Bezout2}{article}{
   author={Shub, M.},
   author={Smale, S.},
   title={Complexity of Bezout's theorem. II. Volumes and probabilities},
   conference={
      title={Computational algebraic geometry},
      address={Nice},
      date={1992},
   },
   book={
      series={Progr. Math.},
      volume={109},
      publisher={Birkh\"auser Boston},
      place={Boston, MA},
   },
   date={1993},
   pages={267--285},
   review={\MR{1230872 (94m:68086)}},
}

\bib{Vitale}{article}{
   author={Vitale, Richard A.},
   title={Expected absolute random determinants and zonoids},
   journal={Ann. Appl. Probab.},
   volume={1},
   date={1991},
   number={2},
   pages={293--300},
   issn={1050-5164},
   review={\MR{1102321}},
}

\bib{Weil76}{article}{
   author={Weil, Wolfgang},
   title={Centrally symmetric convex bodies and distributions},
   journal={Israel J. Math.},
   volume={24},
   date={1976},
   number={3-4},
   pages={352--367},
   issn={0021-2172},
   review={\MR{420436}},
   doi={10.1007/BF02834765},
}
\end{biblist}
\end{bibsection}

\end{document}